\theoremstyle{plain}
\newtheorem*{theorem*}{Theorem A}
\newtheorem{theorem}[equation]{Theorem}
\newtheorem{proposition}[equation]{Proposition}
\newtheorem{lemma}[equation]{Lemma}
\newtheorem{corollary}[equation]{Corollary}
\newtheorem{conjecture}[equation]{Conjecture}
\theoremstyle{remark}
\newtheorem{remark}[equation]{Remark}
\theoremstyle{definition}
\numberwithin{equation}{subsection}
\begin{document}

\title[Superlevel sets and nodal extrema of Laplace eigenfunctions]{Superlevel sets and nodal extrema of Laplace-Beltrami eigenfunctions}
\author{Guillaume Poliquin}
\let\thefootnote\relax\footnotetext{\emph{2010 Mathematics subject classification. Primary: 35P20 ; secondary: 35P15, 58J50.}}
\thanks{Research supported by a NSERC scholarship}

\begin{abstract}
We estimate the volume of superlevel sets of Laplace-Beltra\-mi eigenfunctions on a compact Riemannian manifold. The proof uses the Green's function representation and the Bathtub principle. As an application, we obtain upper bounds on the distribution of the extrema of a Laplace-Beltrami eigenfunction over its nodal domains. Such bounds have been previously proved by L. Polterovich and M. Sodin in the case of compact surfaces. Our techniques allow to generalize these results to arbitrary dimensions. We also discuss a different approach to the problem based on reverse Hölder inequalities due to G. Chiti.
\end{abstract}
\maketitle

\smallskip
\noindent \textbf{Keywords.} Laplacian, Riemannian manifold, Eigenfunction, Nodal domain, Bathtub principle.

\section{Introduction and main results}
\subsection{Notation}

Let $(M^n,g)$ be a compact, connected $n-$dimensional Riemannian manifold with or without boundary. Let $\Delta_g:C^\infty(M) \to C^\infty(M)$ denote the negative Laplace-Beltrami operator on $M$. In local coordinates $\{x_i\}_{i=1}^n$, we write
\begin{equation} \label{local}
\Delta_g = \frac{-1}{\sqrt{det(g)}} \sum \frac{\partial}{\partial x_i} (\sqrt{det(g)} g^{ij} \frac{\partial}{\partial x_j}),
\end{equation}
where the matrix $(g^{ij})$ is the inverse matrix of $g = (g_{ij})$.

We consider the closed eigenvalue problem,
\begin{equation} \label{closed}
\Delta_g u_\lambda = \lambda u_\lambda,
\end{equation}
and when $M$ has a boundary, we impose Dirichlet eigenvalue problem,
\begin{equation} \label{Dirichlet}
 \left\{
  \begin{array}{l l}
    \Delta_g u = \lambda u  \mbox{ in } M, \\
     u=0 \mbox{ on } \partial M.\\
   \end{array} \right.
\end{equation}

In both settings, $\Delta_g$ has a discrete spectrum,
\begin{equation} \nonumber
0\leq \lambda_1(M,g) \leq \lambda_2(M,g) \leq ... \nearrow +\infty,
\end{equation}
where $\lambda_1(M,g) > 0$ if $\partial M \not= \emptyset$. Let $|| . ||_p$ be the usual $|| . ||_{L^p(M)}$ norm and let $\sigma$ be the Riemannian volume form on $M$ and let $\operatorname{Vol}_g(M)$ denote the Riemannian volume of $M$. We normalize $u$ in such a way that  $||u||_{2}^2 = 1$. If $M$ has no boundary, we require that $\int_M u d\sigma = 0$.

\subsection{Volume of superlevel sets}

We define a nodal domain $A$ of an eigenfunction $u_\lambda$ on $M$ as a maximal connected open subset of $\{u_\lambda \not=0\}$. We denote by $\mathcal{A}(u_\lambda)$ the collection of all its nodal domains.

Let us first consider the Euclidean case. It is known that nodal domains can not be too small. For instance, this can be seen by the Faber-Krahn inequality, stating that given $A_i \in \mathcal{A}(u_\lambda)$,
\begin{equation}\label{FK}
\operatorname{Vol}(A_i) \geq \left(\lambda_1(B)^{n/2} |B|\right) \lambda^{-n/2}.
\end{equation}
Denote by $V_{\delta}^i = \{ x \in A_i : |u_\lambda(x)| \geq \delta ||u_\lambda||_{L^\infty(A_i)} \}$ the $\delta$-superlevel sets of the restriction of an eigenfunction to one of its nodal domain. The next result can be seen as a refinement of that observation. Indeed, each $\delta$-superlevel set of an eigenfunction can not be too small:
\begin{lemma}\label{lemme2}
Let $n\geq 3$. For all $\delta \in (0,1)$, we have that
\begin{equation}\label{lemme2eq}
\operatorname{Vol}(V^i_{\delta})  \geq  (1 - \delta)^{\frac{n}{2}} (2(n-2))^{\frac{n}{2}} \alpha_n \lambda^{-\frac{n}{2}},
\end{equation}
where $\alpha_n$ stands for the volume of the $n$-dimensional unit ball.
\end{lemma}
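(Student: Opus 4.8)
The plan is to work on a single nodal domain $A_i$, replace $u_\lambda$ there by the truncation $w=(u_\lambda-\delta M_i)^+$ with $M_i:=\|u_\lambda\|_{L^\infty(A_i)}$, and then play the Green's function representation of $w$ on its support against the Bathtub principle. Since the statement involves only $|u_\lambda|$ restricted to $A_i$, I would first assume $u_\lambda>0$ on $A_i$ and fix $x_0\in A_i$ with $u_\lambda(x_0)=M_i$; here $M_i>0$ and the maximum is attained in the interior since $u_\lambda=0$ on $\partial A_i$. In the Euclidean setting $-\Delta u_\lambda=\lambda u_\lambda$ in $A_i$ (with $\Delta$ the usual Laplacian), $u_\lambda=0$ on $\partial A_i$, and $\lambda>0$. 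Put $\Omega:=\{x\in A_i:u_\lambda(x)>\delta M_i\}$. As $\delta\in(0,1)$ we have $x_0\in\Omega$, and as $u_\lambda=0<\delta M_i$ on $\partial A_i$ we have $\overline{\Omega}\subset A_i$. Hence $w:=u_\lambda-\delta M_i$, regarded on $\Omega$, is nonnegative, continuous on $\overline{\Omega}$, lies in $W^{1,2}_0(\Omega)$, solves $-\Delta w=\lambda u_\lambda$ in $\Omega$, vanishes on $\partial\Omega$, and satisfies $w(x_0)=(1-\delta)M_i$.

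Next, let $G_\Omega\ge 0$ be the Dirichlet Green's function of $-\Delta$ on $\Omega$. The representation formula, together with $0<u_\lambda\le M_i$ on $\Omega$, gives
\[
(1-\delta)M_i=w(x_0)=\lambda\int_\Omega G_\Omega(x_0,y)\,u_\lambda(y)\,dy\le \lambda M_i\int_\Omega G_\Omega(x_0,y)\,dy.
\]
By the maximum principle $G_\Omega(x,y)\le\Phi(x-y)$, where $\Phi(z)=\tfrac{1}{(n-2)\,\omega_{n-1}}|z|^{2-n}$ is the fundamental solution of $-\Delta$ on $\mathbb{R}^n$ and $\omega_{n-1}=n\alpha_n$ is the area of $S^{n-1}$; this is the step where $n\ge 3$ is used. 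Thus $1-\delta\le\lambda\int_\Omega\Phi(x_0-y)\,dy$. By the Bathtub principle, among all measurable $E$ with $|E|=|\Omega|$ the integral $\int_E\Phi(x_0-y)\,dy$ is largest when $E=B(x_0,\rho)$ with $\alpha_n\rho^n=|\Omega|$, and a radial computation gives $\int_{B(x_0,\rho)}\Phi(x_0-y)\,dy=\tfrac{\rho^2}{2(n-2)}$. Therefore
\[
1-\delta\le\frac{\lambda}{2(n-2)}\left(\frac{|\Omega|}{\alpha_n}\right)^{2/n},
\]
and solving for $|\Omega|$ and using $\operatorname{Vol}(V^i_\delta)\ge|\Omega|$ (as $V^i_\delta\supseteq\Omega$) yields exactly \eqref{lemme2eq}.

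The step demanding the most care is the representation identity above, because $\Omega=\{u_\lambda>\delta M_i\}$ need not have smooth boundary ($\delta M_i$ may be a critical value of $u_\lambda$). I would handle this by working throughout at the level of $W^{1,2}_0(\Omega)$ --- for any bounded open set and $n\ge 3$ the Green's function exists, the representation of the $W^{1,2}_0$-solution of $-\Delta w=f$ holds, and $0\le G_\Omega\le\Phi$ --- rather than demanding regularity up to $\partial\Omega$; alternatively one could prove the estimate first for almost every $\delta$ (the regular values, via Sard's theorem) and then let $\delta$ vary by continuity. The remaining ingredients --- the radial integral, the precise form of the Bathtub principle, and the concluding algebra --- are routine.
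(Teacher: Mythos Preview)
Your proof is correct. It rests on the same ingredients as the paper's --- the fundamental solution, the rearrangement bound $\int_\Omega\Phi(x_0-y)\,dy\le\int_{\Omega^*}\Phi(x_0-y)\,dy$, and the maximum principle --- but packages them differently. The paper constructs an auxiliary $w$ as the Newton potential on all of $\mathbb{R}^n$ of $-\lambda\chi_{V^i_\delta}u_{\lambda,i}$, so that $u_\lambda-\delta+w$ is harmonic on $V^i_\delta$, and then applies the maximum principle to this harmonic function after bounding $\|w\|_\infty$ via Hardy--Littlewood rearrangement (Proposition~\ref{proptech1}). You instead take $w=u_\lambda-\delta M_i$ on $\Omega$ itself, invoke the Dirichlet Green's function of $\Omega$, and use $G_\Omega\le\Phi$; the maximum principle is absorbed into this last comparison. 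Your route is slightly more streamlined and delivers the stated constant without the extra factor of $2$ that the paper's written argument incurs on the boundary; interestingly, it is exactly the strategy the paper itself adopts for the manifold generalisation (Proposition~\ref{proptech2} together with Lemma~\ref{rearrangement}), where global Newton potentials and Hardy--Littlewood rearrangement are no longer available. Your handling of the possibly irregular boundary of $\Omega$ is adequate: either the Sard argument you mention, or the approximation $w_\epsilon=(w-\epsilon)^+\to w$ in $W^{1,2}(\Omega)$ with $\operatorname{supp}w_\epsilon\Subset\Omega$, shows $w\in W^{1,2}_0(\Omega)$ and justifies the Green representation.
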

\noindent The preceding lemma and its proof were suggested by F. Nazarov and M. Sodin \cite{NS}.

Letting $\delta \to 0$ in \eqref{lemme2eq} yields that $$\operatorname{Vol}(V^i_0) = \operatorname{Vol}(A_i) \geq C_{n} \lambda^{-\frac{n}{2}},$$ which is an inequality {\it à la Faber-Krahn} comparable to \eqref{FK}. However, the constant is not optimal when compared to Faber-Krahn inequality since $C_{n, \delta}$  tends to $C_{n}=(2(n-2))^{\frac{n}{2}} \alpha_n$ as $\delta \to 0$.

The proof of Lemma \ref{lemme2} is based on the maximum principle, applied to a precise linear combination of the eigenfunction $u_\lambda$ and of a certain function $w$. The function $w$ is the solution of the following Poisson problem:
\begin{equation*}
\Delta w = -\lambda \chi_{V^i_\delta}  u_{\lambda,i} \mbox{ in } \mathbb{R}^n,
\end{equation*}
where $\chi_{V^i_\delta}$ denotes the characteristic function associated to $V^i_\delta$ and $u_{\lambda,i}$ denotes the restriction of $u_\lambda$ to $A_i$. An upper bound on the function $w$ is required while applying the maximum principle. The bound is proved using decreasing rearrangement of functions, as done in \cite[p. 185]{T2}. The next result is a generalization of Lemma \ref{lemme2}, adapted to manifolds of arbitrary dimension:
%The proof of Theorem \ref{Th2} is proved, inter alia, using a generalization of Lemma \ref{lemme2} to manifolds :

\begin{theorem}\label{lemme1}
Let $\delta \in (0,1)$ and $n\geq2$. There exist $\lambda_0 >0$ and $k_{g,\delta,\lambda_0}>0$ such that for all $\lambda \geq \lambda_0$, we have that
\begin{equation}\label{lemme1eq}
\operatorname{Vol}_g (V^i_{\delta})\geq k_{g, \delta,\lambda_0}\lambda^{-\frac{n}{2}}, \quad \forall i.
\end{equation}
\end{theorem}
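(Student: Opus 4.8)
The plan is to transplant the proof of Lemma~\ref{lemme2} to a small geodesic ball, replacing the Euclidean fundamental solution by the Green's function of $(M,g)$ and absorbing the geometric distortion into the threshold $\lambda_0$ and the constant $k_{g,\delta,\lambda_0}$. Fix a nodal domain $A_i$; after changing sign, assume $u_\lambda>0$ on $A_i$, and pick $x_i\in A_i$ with $u_\lambda(x_i)=\|u_\lambda\|_{L^\infty(A_i)}=:M$, which exists because $\closure{A_i}$ is compact and $u_\lambda$ vanishes on $\partial A_i$. We may assume $\operatorname{Vol}_g(V^i_\delta)\le\lambda^{-n/2}$, since otherwise the inequality already holds. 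Choosing $\lambda_0=\lambda_0(g)$ large, for $\lambda\ge\lambda_0$ every geodesic ball of radius $\lesssim\lambda^{-1/2}$ sits well inside the injectivity radius and has Riemannian volume, and carries a Green's function, within a factor $1+o(1)$ (as $\lambda\to\infty$) of the corresponding Euclidean model, uniformly in the centre.

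First I would use the Green's function representation on the nodal domain itself. Since $u_\lambda$ vanishes on $\partial A_i$ there is no boundary term:
\begin{equation*}
M=u_\lambda(x_i)=\lambda\int_{A_i}G_{A_i}(x_i,y)\,u_\lambda(y)\,d\sigma(y),
\end{equation*}
where $G_{A_i}\ge 0$ is the Dirichlet Green's function of $\Delta_g$ on $A_i$. By domain monotonicity, $G_{A_i}$ is dominated by a local Green's function of $(M,g)$, so $G_{A_i}(x_i,y)\le\Gamma\big(d_g(x_i,y)\big)$ for $d_g(x_i,y)$ small, with $\Gamma(r)=(1+o(1))\big((n-2)\,\omega_{n-1}\big)^{-1}r^{2-n}$ when $n\ge 3$ (and $\Gamma(r)=(1+o(1))(2\pi)^{-1}\log(1/r)$ when $n=2$), while $G_{A_i}(x_i,\cdot)$ is bounded away from the diagonal by a constant depending only on $(M,g)$.

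Next I would split $A_i=V^i_\delta\cup(A_i\setminus V^i_\delta)$. On the complement $u_\lambda<\delta M$, so that part of the integral is at most $\lambda\delta M\int_{A_i}G_{A_i}(x_i,y)\,d\sigma(y)=\lambda\delta M\,\tau_{A_i}(x_i)$, where $\tau_{A_i}$ is the torsion function of $A_i$; since the restriction of $u_\lambda$ to $A_i$ is a first Dirichlet eigenfunction, $\lambda_1(A_i)=\lambda$, and the classical comparison $\|\tau_{A_i}\|_{L^\infty}\le c_n/\lambda_1(A_i)$ bounds this by $c_n\delta M$. On $V^i_\delta$, using $u_\lambda\le M$ together with the Green's function bound, the contribution is at most $\lambda M\int_{V^i_\delta}\Gamma\big(d_g(x_i,y)\big)\,d\sigma(y)+o(M)$ — the error coming from the part of $V^i_\delta$ lying outside a fixed small ball, harmless because $\operatorname{Vol}_g(V^i_\delta)\le\lambda^{-n/2}$ — and by the Bathtub principle this integral is largest, among sets of Riemannian volume $\operatorname{Vol}_g(V^i_\delta)$, on the geodesic ball of that volume centred at $x_i$, where it equals $(1+o(1))\tfrac{1}{2(n-2)}\big(\operatorname{Vol}_g(V^i_\delta)/\alpha_n\big)^{2/n}$ (for $n\ge 3$; for $n=2$ one uses the logarithmic model). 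Adding the two estimates and dividing by $M$ gives
\begin{equation*}
1\le c_n\delta+(1+o(1))\,\frac{\lambda}{2(n-2)}\Big(\frac{\operatorname{Vol}_g(V^i_\delta)}{\alpha_n}\Big)^{2/n}\qquad(\lambda\ge\lambda_0),
\end{equation*}
hence $\operatorname{Vol}_g(V^i_\delta)\ge k_{g,\delta,\lambda_0}\lambda^{-n/2}$ with $k_{g,\delta,\lambda_0}=(1+o(1))^{-n/2}\alpha_n\big(2(n-2)(1-c_n\delta)\big)^{n/2}$, which is positive once $\delta<1/c_n$. For the remaining range $\delta\in[1/c_n,1)$, where $k_{g,\delta,\lambda_0}$ is allowed to degenerate, I would replace the crude estimate of the complementary part by the symmetric decreasing rearrangement bound for the Green potential used in the proof of Lemma~\ref{lemme2} and in \cite[p.~185]{T2}; this improves $c_n\delta M$ to $\delta M$, yielding the bound for every $\delta\in(0,1)$ with the shape of \eqref{lemme2eq}.

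The step I expect to be the main obstacle is the uniform geometric bookkeeping: a nodal domain $A_i$ need not be small, so one cannot simply pass to a single coordinate chart, and one must verify that the pointwise bound $G_{A_i}(x_i,y)\le(1+o(1))\,\Gamma_{\mathrm{Eucl}}(d_g(x_i,y))$ near the diagonal, the Riemannian volume of small geodesic balls, and the optimiser in the Bathtub principle are each within $1+o(1)$ of their Euclidean counterparts, uniformly in $i$ and in $x_i\in M$ — this is precisely what forces the passage to $\lambda\ge\lambda_0(g)$ and the dependence of $k_{g,\delta,\lambda_0}$ on $g$. The other ingredient to import carefully is the comparison of the torsion function of $A_i$ with $\lambda_1(A_i)=\lambda$ (equivalently, the Talenti rearrangement estimate).
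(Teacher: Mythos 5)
Your scheme is genuinely different from the paper's: you represent $m_{A_i}=u_\lambda(x_i)=\lambda\int_{A_i}G_{A_i}(x_i,y)u_\lambda(y)\,d\sigma(y)$ with the Dirichlet Green's function of the \emph{whole} nodal domain and split off $A_i\setminus V^i_\delta$ via the torsion function $\tau_{A_i}$, whereas the paper never integrates over the complement at all: it solves $\Delta w=-\lambda\chi_{V^i_\delta}u_{\lambda,i}$, so that $u-\delta+w$ is harmonic on $V^i_\delta$, is at most $\|w\|_\infty$ on $\partial V^i_\delta$ and at least $(1-\delta)-\|w\|_\infty$ at $x_i$, giving $(1-\delta)\le C_{g,\lambda_0}\lambda\operatorname{Vol}_g(V^i_\delta)^{2/n}$ for every $\delta\in(0,1)$ (Proposition \ref{proptech2} plus the maximum principle), with no torsion estimate needed. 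Your variant has a genuine gap in the range of $\delta$. Any constant $c$ in the bound $\|\tau_{A_i}\|_\infty\le c/\lambda_1(A_i)$ necessarily satisfies $c\ge\lambda\,\tau_{A_i}(x_i)\ge 1$: since $-\Delta(\lambda\tau_{A_i})=\lambda\ge\lambda u_\lambda/m_{A_i}=-\Delta(u_\lambda/m_{A_i})$ in $A_i$ with equal (zero) boundary values, the maximum principle gives $\lambda\tau_{A_i}\ge u_\lambda/m_{A_i}$, hence $\lambda\tau_{A_i}(x_i)\ge1$; for the model case of the first eigenfunction of a Euclidean ball one gets $\lambda\tau(x_i)=j_{n/2-1}^2/(2n)>1$. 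Consequently your inequality $1\le c\,\delta+C\lambda\operatorname{Vol}_g(V^i_\delta)^{2/n}$ is vacuous for $\delta\ge 1/c$, i.e.\ exactly for $\delta$ close to $1$, which the theorem must cover. Your proposed repair for that range cannot work: improving $c\,\delta M$ to $\delta M$ would amount to $\lambda\int_{A_i\setminus V^i_\delta}G_{A_i}(x_i,y)\,d\sigma(y)\le 1$, which already fails for the ball when $\delta$ is close to $1$ (there $V^i_\delta$ shrinks to the centre and the left-hand side tends to $j_{n/2-1}^2/(2n)>1$); and a Bathtub/rearrangement bound on that integral only yields $C\lambda\operatorname{Vol}_g(A_i\setminus V^i_\delta)^{2/n}$, which is useless because nodal domains admit no $O(\lambda^{-n/2})$ volume \emph{upper} bound.

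Two further points to flag. The estimate $\|\tau_{A_i}\|_\infty\le c_n/\lambda_1(A_i)$ with a purely dimensional constant is a Euclidean fact (Talenti symmetrization plus Faber--Krahn, or van den Berg--Carroll); on a compact Riemannian manifold it is not ``classical'' and would itself require proof (a $g$-dependent version for $\lambda\ge\lambda_0$ can be extracted from small-time Gaussian heat-kernel bounds, but you must supply that argument, and it does not remove the obstruction above since the constant is still $>1$). The remaining ingredients you list — the Green's function upper bound near the diagonal, the volume comparison for small geodesic balls, the Bathtub principle identifying the geodesic ball centred at $x_i$ as extremiser, and $\lambda_1(A_i)=\lambda$ — coincide with the paper's Proposition \ref{proptech2} and Lemma \ref{rearrangement} and are fine. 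To close the gap, either restrict your argument to $\delta<1/c$ (insufficient for the statement) or restructure it along the paper's lines, keeping the source of the auxiliary potential supported in $V^i_\delta$ only, so that the complement of the superlevel set never enters the estimate.
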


The proof of Theorem \ref{lemme1} is similar to the proof of its $\mathbb{R}^n$ counterpart. The key idea is to choose a specific linear combination involving $u_{\lambda,i}$ and the solution of the following Poisson problem,
\begin{equation*}
\Delta w = - \lambda \chi_{V^i_\delta}  u_{\lambda,i} \mbox{ in } M.
\end{equation*}
%$$w(x_0) = -\lambda \int_M \chi_{V^i_\delta} G(x,x_0) u_{\lambda,i}(x) d\sigma,$$
In order to apply the maximum principle, it is required to bound the function $w$ in terms of $\lambda$ and of the volume of $V^i_\delta$. The method used to do so differs from the one used in $\mathbb{R}^n$ since decreasing rearrangement of functions no longer works on arbitrary manifolds. Instead, we use an upper bound for Green functions on $M$ in conjunction with a certain form of the Bathtub principle (see \cite[Theorem 1.14]{LL}), that is an upper bound for the integral of a non-negative decreasing radial function:

\begin{lemma}\label{rearrangement}
Let $x_0\in M$. Let $r(x) = d_g(x_0,x)$ the Riemannian distance between $x$ and $x_0$. Let $f(r)$ denote a non-negative strictly decreasing function. Given fixed positive constant $C>0$, then
$$ \sup_{\Omega \subset X, \ \operatorname{Vol}_g(\Omega) = C} \ \ \int_\Omega f(r) d\sigma = \int_{\Omega^*} f(r) d\sigma,$$
where $\Omega^*$ is the geodesic ball centered at $x_0$ of radius $R$, where $R$ is such that $|\Omega| = |\Omega^*|$.
\end{lemma}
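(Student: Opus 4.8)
The plan is to recognize Lemma \ref{rearrangement} as an instance of the Bathtub principle \cite[Theorem 1.14]{LL}: among measurable sets of prescribed volume, the integral of a radially nonincreasing weight is largest on a sublevel set of $r$, i.e. on a geodesic ball centered at $x_0$. I will give the short self-contained rearrangement argument. First I would dispose of the degenerate cases (if $C>\operatorname{Vol}_g(M)$ there is no admissible $\Omega$, and if $C=\operatorname{Vol}_g(M)$ both sides are $\int_M f(r)\,d\sigma$), so assume $0<C<\operatorname{Vol}_g(M)$, and then produce the equalizing radius $R$. Setting $V(\rho)=\operatorname{Vol}_g\big(B_g(x_0,\rho)\big)$ for the open geodesic ball $B_g(x_0,\rho)$, the measure $\sigma$ is absolutely continuous with respect to Lebesgue measure in charts, so $V$ is continuous and nondecreasing with $V(0)=0$ and $V(\rho)\to\operatorname{Vol}_g(M)$; the intermediate value theorem yields $R>0$ with $V(R)=C$. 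Put $\Omega^*=B_g(x_0,R)$. Since the metric sphere $\{r=R\}$ is $\sigma$-null, it is immaterial whether $\Omega^*$ is taken to be the open or the closed ball.

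Now the core estimate. Let $\Omega\subseteq M$ be measurable with $\operatorname{Vol}_g(\Omega)=C$. Splitting the difference over the two halves of the symmetric difference,
\[
\int_{\Omega^*} f(r)\,d\sigma-\int_{\Omega} f(r)\,d\sigma=\int_{\Omega^*\setminus\Omega} f(r)\,d\sigma-\int_{\Omega\setminus\Omega^*} f(r)\,d\sigma .
\]
On $\Omega^*\setminus\Omega$ one has $r\le R$, hence $f(r)\ge f(R)$ by monotonicity of $f$; on $\Omega\setminus\Omega^*$ one has $r\ge R$, hence $f(r)\le f(R)$. Because $\Omega$ and $\Omega^*$ have the same (finite) volume $C$,
\[
\operatorname{Vol}_g(\Omega^*\setminus\Omega)=C-\operatorname{Vol}_g(\Omega^*\cap\Omega)=\operatorname{Vol}_g(\Omega\setminus\Omega^*).
\]
Combining,
\[
\int_{\Omega^*\setminus\Omega} f(r)\,d\sigma\;\ge\;f(R)\,\operatorname{Vol}_g(\Omega^*\setminus\Omega)\;=\;f(R)\,\operatorname{Vol}_g(\Omega\setminus\Omega^*)\;\ge\;\int_{\Omega\setminus\Omega^*} f(r)\,d\sigma ,
\]
so the displayed difference is nonnegative, i.e. $\int_\Omega f(r)\,d\sigma\le\int_{\Omega^*} f(r)\,d\sigma$. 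Since $\Omega^*$ is itself admissible, the supremum is attained at $\Omega^*$ and equals $\int_{\Omega^*} f(r)\,d\sigma$, which is the assertion. (If $f$ is not integrable near $x_0$ the right-hand side is $+\infty$ and the inequality is trivial.)

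I do not anticipate a genuine obstacle. The only points needing a little care are the existence of the radius $R$ that equalizes the volumes — which is where continuity of $V$, hence absolute continuity of $\sigma$, is used — and the harmless remark that distance spheres are $\sigma$-null, so the open versus closed ball distinction does not affect anything. Note that strict monotonicity of $f$ is not needed for the inequality; it would only be relevant for identifying $\Omega^*$ as the unique maximizer up to null sets, which the statement does not claim.
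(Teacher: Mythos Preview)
Your proposal is correct and takes essentially the same approach as the paper: both identify the lemma as an instance of the Bathtub principle \cite[Theorem~1.14]{LL}, with the optimizer being a geodesic ball because $f$ is strictly decreasing in $r$. The only difference is presentational---the paper simply cites \cite{LL} as a black box, whereas you supply the standard self-contained symmetric-difference argument (which is, in effect, the proof of the Bathtub principle restricted to characteristic functions).
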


Lemma \ref{rearrangement} can also be seen as a weaker form of decreasing rearrangement that has the advantage of being usable in a more general setting.

\subsection{Nodal extrema on closed manifolds}

The second objective of the paper is to study the distribution of so called nodal extrema,  defined as follows:
$$m_{A_i}:=\displaystyle \max_{x \in A_i} |u_\lambda (x) |,$$
where $A_i\in\mathcal{A}(u_\lambda)$. Nodal extrema on compact surfaces were previously studied in \cite{PS}. We consider the more general case of compact Riemannian manifolds of arbitrary dimension. Since the proofs given in \cite{PS} rely on the classification of surfaces and the existence of conformal coordinates, no direct generalization of their results is possible.

Our first main result in that direction is the following:

\begin{theorem}\label{Th2}
Let $(M^n,g)$ be a compact closed manifold with $n\geq 2$. If $\lambda$ is large enough, then there exists $k_g>0$ such that
\begin{equation}\label{Th2eq2}
\sum_{i=1}^{|\mathcal{A}(u_\lambda)|} m_{A_i}^p \leq k_g \lambda^{\frac{n}{2} + p \delta(p)},
\end{equation}
holds for any $p \geq 2$. Here, $\delta(p)$ corresponds to
\begin{equation}
\text{$\delta(p) = \left\{
  \begin{array}{l l}
      \dfrac{n-1}{4} \left( \dfrac{1}{2} - \dfrac{1}{p} \right), & \quad \text{$2 \leq p \leq \dfrac{2(n+1)}{n-1},$}\\
      \dfrac{n}{2}  \left( \dfrac{1}{2} - \dfrac{1}{p} \right) - \dfrac{1}{4}, & \quad \text{$\dfrac{2(n+1)}{n-1} \leq p \leq +\infty$.}\\
   \end{array} \right.$}
\end{equation}
\end{theorem}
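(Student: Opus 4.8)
The plan is to reduce the statement to the classical Sogge $L^{p}$ estimates for eigenfunctions, using Theorem~\ref{lemme1} to convert an $L^{p}$ bound on $u_{\lambda}$ into a bound on the sum of the nodal maxima. The point is that on each nodal domain $A_{i}$, the eigenfunction is comparable to $m_{A_{i}}$ on a set whose volume is bounded below by $\lambda^{-n/2}$, so $m_{A_{i}}^{p}$ is controlled by $\lambda^{n/2}$ times the $L^{p}$ mass of $u_{\lambda}$ on $A_{i}$; summing over the (disjoint) nodal domains turns the right-hand side into $\lambda^{n/2}\|u_{\lambda}\|_{p}^{p}$, and Sogge's bound finishes the job.

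Concretely, I would first fix some $\delta\in(0,1)$, say $\delta=1/2$, and apply Theorem~\ref{lemme1}: there are $\lambda_{0}>0$ and $k_{g,\delta,\lambda_{0}}>0$ with $\operatorname{Vol}_{g}(V^{i}_{\delta})\ge k_{g,\delta,\lambda_{0}}\lambda^{-n/2}$ for every nodal domain $A_{i}$ and every $\lambda\ge\lambda_{0}$. By definition of $V^{i}_{\delta}$ we have $|u_{\lambda}|\ge\delta\,m_{A_{i}}$ there, so for any $p\ge2$,
\[
\int_{A_{i}}|u_{\lambda}|^{p}\,d\sigma\;\ge\;\int_{V^{i}_{\delta}}|u_{\lambda}|^{p}\,d\sigma\;\ge\;(\delta\,m_{A_{i}})^{p}\operatorname{Vol}_{g}(V^{i}_{\delta})\;\ge\;\delta^{p}k_{g,\delta,\lambda_{0}}\,\lambda^{-n/2}\,m_{A_{i}}^{p}.
\]
Since the nodal set $\{u_{\lambda}=0\}$ has measure zero, the $A_{i}$ partition $M$ up to a null set, so summing and rearranging gives
\[
\sum_{i=1}^{|\mathcal{A}(u_{\lambda})|}m_{A_{i}}^{p}\;\le\;\frac{\lambda^{n/2}}{\delta^{p}k_{g,\delta,\lambda_{0}}}\sum_{i}\int_{A_{i}}|u_{\lambda}|^{p}\,d\sigma\;=\;\frac{\lambda^{n/2}}{\delta^{p}k_{g,\delta,\lambda_{0}}}\,\|u_{\lambda}\|_{p}^{p}.
\]
Finally I would invoke Sogge's eigenfunction estimate, $\|u_{\lambda}\|_{p}\le C_{g}\,\lambda^{\delta(p)}\|u_{\lambda}\|_{2}$ for $\lambda\ge1$, where $\delta(p)$ is exactly the exponent in the statement (the standard Sogge exponent, halved because the eigenvalue here is $\lambda$ rather than $\lambda^{2}$). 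With the normalization $\|u_{\lambda}\|_{2}=1$ this yields \eqref{Th2eq2} with $k_{g}=(C_{g}/\delta)^{p}/k_{g,\delta,\lambda_{0}}$ once $\lambda\ge\lambda_{0}$.

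The substantive work has already been carried out in Theorem~\ref{lemme1}, so what remains is mostly bookkeeping; the step requiring the most care is matching normalizations — checking that the Sogge exponent expressed in terms of the eigenvalue $\lambda$ coincides with the piecewise $\delta(p)$ in the statement, including the breakpoint $p=\tfrac{2(n+1)}{n-1}$. The only genuinely mild caveat is that the constant $k_{g}$ produced this way also depends on $p$ (through the factor $\delta^{-p}$, which blows up as $p\to\infty$ for any fixed $\delta<1$), so the estimate should be read as "for each fixed $p\ge2$"; obtaining a $p$-uniform constant, if desired, would require a separate argument. The "$\lambda$ large enough" hypothesis is inherited directly from the threshold $\lambda_{0}$ of Theorem~\ref{lemme1}.
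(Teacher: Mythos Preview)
Your argument is correct and follows essentially the same route as the paper: apply Theorem~\ref{lemme1} on each nodal domain to get $\int_{A_i}|u_\lambda|^p\,d\sigma\ge c\,\lambda^{-n/2}m_{A_i}^p$, sum over the disjoint $A_i$, and finish with Sogge's $L^p$ bound. Your remark that the resulting constant depends on $p$ (via the factor $\delta^{-p}$) is a fair observation the paper does not make explicit.
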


Note that $\delta(p)$ is C. Sogge's classical $L^p$ bounds,  $||u||_p \leq C \lambda^{\delta(p)}||u||_2$ (\cite[Ch. 5]{S}). The proof of Theorem \ref{Th2} is an application of Theorem \ref{lemme1}.

As an immediate corollary of Theorem \ref{Th2}, we have the following:

\begin{corollary}\label{CorPrinc}
Let $(M^n,g)$ be a compact closed manifold. If $\lambda$ is large enough, then there exists $k_g>0$ such that
\begin{equation}\label{Th2eq1}
\sum_{i=1}^{|\mathcal{A}(u_\lambda)|} m_{A_i} \leq k_g \lambda^{\frac{n}{2}}.
\end{equation}
\end{corollary}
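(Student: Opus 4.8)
The plan is to deduce the estimate directly from the case $p=2$ of Theorem~\ref{Th2}, combined with a polynomial bound on the number of nodal domains and the Cauchy--Schwarz inequality. First I would set $p = 2$ in \eqref{Th2eq2}. Since $\delta(2) = \tfrac{n-1}{4}\bigl(\tfrac12 - \tfrac12\bigr) = 0$, the exponent on the right-hand side of \eqref{Th2eq2} becomes $\tfrac n2$, so Theorem~\ref{Th2} provides a constant $k_g > 0$ with
\begin{equation*}
\sum_{i=1}^{|\mathcal{A}(u_\lambda)|} m_{A_i}^2 \;\leq\; k_g \,\lambda^{\frac n2}
\end{equation*}
for all $\lambda$ large enough.

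Next I would bound $|\mathcal{A}(u_\lambda)|$. Fixing, say, $\delta = \tfrac12$ in Theorem~\ref{lemme1}, every nodal domain satisfies $\operatorname{Vol}_g(A_i) \geq \operatorname{Vol}_g(V^i_{1/2}) \geq k_{g,1/2,\lambda_0}\,\lambda^{-n/2}$ once $\lambda \geq \lambda_0$. Because the nodal domains are pairwise disjoint and contained in $M$, summing these lower bounds gives $|\mathcal{A}(u_\lambda)|\cdot k_{g,1/2,\lambda_0}\,\lambda^{-n/2} \leq \operatorname{Vol}_g(M)$, hence $|\mathcal{A}(u_\lambda)| \leq C_g\,\lambda^{n/2}$ with $C_g$ depending only on $(M,g)$. (Alternatively, one can invoke Courant's nodal domain theorem together with the Weyl lower bound $\lambda_k \gtrsim k^{2/n}$ to reach the same conclusion.)

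Finally, by Cauchy--Schwarz,
\begin{equation*}
\sum_{i=1}^{|\mathcal{A}(u_\lambda)|} m_{A_i} \;\leq\; |\mathcal{A}(u_\lambda)|^{\frac12}\Bigl(\sum_{i=1}^{|\mathcal{A}(u_\lambda)|} m_{A_i}^2\Bigr)^{\frac12} \;\leq\; \bigl(C_g\,\lambda^{\frac n2}\bigr)^{\frac12}\bigl(k_g\,\lambda^{\frac n2}\bigr)^{\frac12} \;=\; k_g'\,\lambda^{\frac n2},
\end{equation*}
which is exactly \eqref{Th2eq1}. I do not expect a genuine obstacle here; the only points requiring minor care are choosing $\lambda$ large enough that Theorem~\ref{Th2} (at $p=2$) and Theorem~\ref{lemme1} (at $\delta = \tfrac12$) apply simultaneously, and absorbing all the $(M,g)$-dependent constants into a single $k_g'$.
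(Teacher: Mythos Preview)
Your argument is correct and matches the paper's proof essentially line for line: the paper also applies Theorem~\ref{Th2} with $p=2$ to get $\sum m_{A_i}^2 \leq k_g\lambda^{n/2}$, bounds $|\mathcal{A}(u_\lambda)| \leq k_g\lambda^{n/2}$ via Courant's theorem and Weyl's law, and finishes with Cauchy--Schwarz. Your alternative derivation of the nodal-domain count from Theorem~\ref{lemme1} is a nice self-contained variant, but the paper simply cites Courant--Weyl for that step.
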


Indeed, a consequence of Weyl's law and Courant's theorem is that the number of nodal domains $|\mathcal{A}(u_\lambda)|$ is bounded by $k_g\lambda^{n/2}$ (see for instance \cite{Co, Cha}). Using the latter fact and then applying Cauchy-Schwartz inequality yield that
\begin{eqnarray*}
\sum_{i=1}^{|\mathcal{A}(u_\lambda)|} m_{A_i} & \leq & \left( \sum_{i=1}^{|\mathcal{A}(u_\lambda)|} m_{A_i}^2 \cdot \sum_{i=1}^{|\mathcal{A}(u_\lambda)|} 1 \right)^{\frac{1}{2}} \\
& \leq & k_g \lambda^{n/4} |\mathcal{A}(u_\lambda)|^{\frac{1}{2}} \leq k_g \lambda^{n/2},
\end{eqnarray*}
which is the desired result.

\begin{remark}
For $p=1,2$, it is easy to see that the inequalities are sharp on $\mathbb T^n$ ($\prod \sin(nx_i), \lambda = n^2$). For $p>\dfrac{2(n+1)}{n-1}$, extremals are zonal spherical harmonics. Otherwise, the extremals are highest weight spherical harmonics.
\end{remark}

One can visualise inequalities expressed in Theorem \ref{Th2} and in Corollary \ref{CorPrinc} by considering "fine" dust particles on a vibrating membrane. Indeed, where the membrane's velocity is high, Bernoulli’s equation tells us that the air pressure is low. Since the dust particles are most influenced by air pressure, they are swept by the pressure gradient near nodal extrema (see \cite{CHRS} for some figures illustrating nodal extrema and for more information on such experiments).

\begin{remark}
One can easily obtain bounds on $m_{A_i}$ using the classical Hormander-Levitan-Avakumovic $L^\infty$ bound (see for instance \cite{S}). Indeed, it implies that there exists a constant $k_g>0$ such that $||u_\lambda||_{L^\infty(A_i)}\leq k_g \lambda^{\frac{n-1}{4}}$.  Therefore, we have that
\begin{align*}
\sum_{i=1}^{|\mathcal{A}(u_\lambda)|} ||u_\lambda||_{L^\infty(A_i)}  \leq  k_g |\mathcal{A}(u_\lambda)| \lambda^{\frac{n-1}{4}} \leq k_g \lambda^{\frac{3n-1}{4}},
\end{align*}
which is not optimal when compared to the sharp inequality given in Corollary \ref{CorPrinc}.
\end{remark}

We also obtain a generalization of \cite[Corollary 1.7]{PS}. The result is the following:

\begin{corollary}
Given $a>0$, consider nodal domains such that $m_{A_i} \geq a \lambda^{\frac{n-1}{4}}$. If $\lambda$ is large enough, then there exists $k_g >0$ such that the number of such nodal domains does not exceed $k_g a^{-\frac{2(n+1)}{n-1}}$. In particular, for fixed $a$, it remains bounded as $\lambda \to \infty$.
\end{corollary}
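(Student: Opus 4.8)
The plan is to apply Theorem~\ref{Th2} at the critical Sogge exponent $p_c = \frac{2(n+1)}{n-1}$, which is exactly the value where the two branches in the definition of $\delta(p)$ meet. First I would compute $\delta(p_c)$: since $\frac12 - \frac1{p_c} = \frac{(n+1)-(n-1)}{2(n+1)} = \frac1{n+1}$, the first branch gives $\delta(p_c) = \frac{n-1}{4(n+1)}$, and therefore
$$\frac n2 + p_c\,\delta(p_c) \;=\; \frac n2 + \frac{2(n+1)}{n-1}\cdot\frac{n-1}{4(n+1)} \;=\; \frac n2 + \frac12 \;=\; \frac{n+1}{2}.$$
So, for $\lambda$ large enough, Theorem~\ref{Th2} yields $\sum_i m_{A_i}^{p_c} \le k_g\,\lambda^{(n+1)/2}$, with $k_g$ absorbing the constant from that theorem.

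Next I would bound the left-hand side from below by discarding all but the "large" nodal domains. If $N$ denotes the number of $A_i$ with $m_{A_i} \ge a\lambda^{(n-1)/4}$, then
$$N\,\bigl(a\lambda^{(n-1)/4}\bigr)^{p_c} \;\le\; \sum_{i=1}^{|\mathcal{A}(u_\lambda)|} m_{A_i}^{p_c} \;\le\; k_g\,\lambda^{(n+1)/2}.$$
The decisive point is that the power of $\lambda$ appearing on the left is $\frac{n-1}{4}\,p_c = \frac{n-1}{4}\cdot\frac{2(n+1)}{n-1} = \frac{n+1}{2}$, which matches the exponent on the right exactly. Dividing through, the powers of $\lambda$ cancel and we obtain $N \le k_g\,a^{-p_c} = k_g\,a^{-\frac{2(n+1)}{n-1}}$, a bound independent of $\lambda$; the "in particular" assertion is then immediate.

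I expect there to be essentially no genuine obstacle here: the only thing that must be got right is the recognition that $p_c = \frac{2(n+1)}{n-1}$ is the unique exponent for which the $\lambda$-powers cancel, and this is forced by scaling, since $\lambda^{(n-1)/4}$ is precisely the $L^\infty$ growth rate from the Hörmander--Levitan--Avakumović bound invoked in the preceding remark. Care should be taken to propagate the "$\lambda$ sufficiently large" hypothesis of Theorem~\ref{Th2} and to note that the argument reproduces \cite[Corollary 1.7]{PS} when $n=2$, where $p_c = 6$.
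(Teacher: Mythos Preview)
Your proof is correct and follows essentially the same approach as the paper: apply Theorem~\ref{Th2} at $p=\frac{2(n+1)}{n-1}$, observe that the right-hand side becomes $k_g\lambda^{(n+1)/2}$, bound the sum below by $N_\lambda(a\lambda^{(n-1)/4})^{p}$, and note that the powers of $\lambda$ cancel. The paper's proof is terser but identical in substance.
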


Indeed, letting $N_\lambda$ denote the number of such nodal domains, using $\eqref{Th2eq2}$ with $p=\frac{2(n+1)}{n-1}$, we have that
$$ N_\lambda (a\lambda^{\frac{n-1}{4}})^{\frac{2(n+1)}{n-1}} \leq \sum^{N_\lambda}_{i=1} m_{A_i}^{\frac{2(n+1)}{n-1}} \leq k_g \lambda^{\frac{n+1}{2}},$$
yielding the conclusion.

%Polterovich and Sodin's proof of In order to prove Theorem \ref{Th2}

\subsection{Elliptic operators on Euclidean domains}
We obtain analogous results to Theorem \ref{Th2}. More precisely, we obtain bounds on the distribution of nodal extrema of eigenfunctions associated to the Dirichlet problem of general second order elliptic operators in the divergence form on an Euclidean bounded domain $\Omega$.

Consider the following Dirichlet eigenvalue problem:
\begin{equation}\label{prob}
 \left\{
  \begin{array}{l l}
     L(u) = \lambda u \mbox{ in } \Omega, \\
     u=0 \mbox{ on } \partial\Omega,\\
   \end{array} \right.
\end{equation}
where we consider a general elliptic operator $L$ defined as
$$ L(u) := - \displaystyle \sum_{i,j=1}^n \dfrac{\partial}{\partial x_i} (a_{ij}\dfrac{\partial u}{\partial x_j}) + c u. $$
Here, the coefficients $a_{ij}(x)$ are real measurable functions such that $a_{ij}=a_{ji}, \forall 1 \leq i,j \leq n$. We assume that $c(x)$ is a bounded measurable function such that $c(x)\geq 0$. Note that the non negativity of $c$ can be assumed without loss of generality (see \cite[Remark 1.1.3, p. 3]{H}). For convenience, we normalize the coefficients in such a way that $1$ is the lower ellipticity constant. Thus, the assumption reads
\begin{equation}\label{ellipticity}
\sum_{i,j=1}^n   a_{ij} \xi_i \xi_j \geq |\xi|^2, \forall \ \xi \in \mathbb{R}^n.
\end{equation}

We are ready to state the result:

\begin{theorem}\label{Th1}
Consider $u_\lambda$ an eigenvalue of \eqref{prob} associated to the eigenvalue $\lambda$, then
\begin{equation}\label{P1}
\sum_{i=1}^{|\mathcal{A}(u_\lambda)|} m_{A_i}  \leq K_{n,1} \operatorname{Vol}(\Omega)^{\frac{1}{2}} \lambda^{\frac{n}{2}},
\end{equation}
and
\begin{equation}\label{P2}
\sum_{i=1}^{|\mathcal{A}(u_\lambda)|} m_{A_i}^2  \leq K_{n,2}^2 \lambda^{\frac{n}{2}}.
\end{equation}
The constant $K_{n,p}$ depends on $n$ and on $p$ and is given by
\begin{equation}\label{Const}
K_{n,p}= \frac {2^{1-\frac{n}{2}} (n \alpha_n)^{\frac{-1}{p}}} { \Gamma(\frac{n}{2}) \Big( \int_0^{ j_{\frac{n}{2}-1}}  r^{p- \frac{np}{2}+n-1} J_{\frac{n}{2}-1}^p(r) dr \Big)^{\frac{1}{p}}}.
\end{equation}
\end{theorem}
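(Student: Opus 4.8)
The plan is to mimic the structure used for the eigenvalue problem on manifolds, but to exploit the fact that we are on a Euclidean domain, where the Green's function of the Laplacian has the explicit Newtonian form and where the decreasing rearrangement of functions (in the sense of Talenti) is available with sharp constants. First I would fix a nodal domain $A_i$ and let $u_{\lambda,i}$ denote the restriction of $u_\lambda$ to $A_i$, extended by zero outside $A_i$. Since $L u_\lambda = \lambda u_\lambda$ and $c \ge 0$, the ellipticity assumption \eqref{ellipticity} lets one compare $L$ from below with the Laplacian in the divergence-form energy sense; the point is to control $m_{A_i} = \|u_{\lambda,i}\|_{L^\infty}$ by the solution $w_i$ of the Poisson problem $-\Delta w_i = \lambda \chi_{A_i} u_{\lambda,i}$ together with the normalization of $u_\lambda$. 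A maximum-principle argument applied to $w_i - u_{\lambda,i}$ (or a suitable linear combination, as in Theorem \ref{lemme1}) shows $m_{A_i} \le \sup w_i$, reducing everything to estimating $\sup w_i$.

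Next I would estimate $\sup w_i$ via the Newtonian potential representation: $w_i(x) = \lambda \int_{A_i} G(x,y) u_{\lambda,i}(y)\, dy$ where $G$ is (a constant multiple of) $|x-y|^{2-n}$. Applying Hölder's inequality in $y$ with exponents $p$ and $p'=p/(p-1)$, one gets $\sup w_i \le \lambda \, \|u_{\lambda,i}\|_{L^p(A_i)} \cdot \big(\sup_x \int_{A_i} G(x,y)^{p'} dy\big)^{1/p'}$. The inner integral is bounded by its symmetric decreasing rearrangement: replacing $A_i$ by a ball $A_i^\ast$ of the same volume centered at the point $x$, which is precisely the Bathtub/rearrangement principle (here in its Euclidean Talenti form, cf. \cite{T2, LL}), and this gives a clean power of $\operatorname{Vol}(A_i)$. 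Combining, $m_{A_i}^p \le C_{n,p}\, \lambda^p \operatorname{Vol}(A_i)^{\gamma}\, \|u_{\lambda,i}\|_{L^p(A_i)}^p$ for the appropriate exponent $\gamma$; inserting the Faber–Krahn lower bound $\operatorname{Vol}(A_i) \ge c_n \lambda^{-n/2}$ (equation \eqref{FK}) where it helps, and then summing over $i$ using disjointness of the $A_i$ together with $\sum_i \|u_{\lambda,i}\|_{L^p(A_i)}^p = \|u_\lambda\|_{L^p(\Omega)}^p$, yields $\sum_i m_{A_i}^p \le C_{n,p} \lambda^{\text{(power)}} \|u_\lambda\|_p^p$. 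For $p=2$ one uses $\|u_\lambda\|_2 = 1$ directly, obtaining \eqref{P2}; for $p=1$ one first handles it at the $L^2$ level and then passes to \eqref{P1} by Cauchy–Schwarz against the Courant/Weyl bound $|\mathcal{A}(u_\lambda)| \le C_n \operatorname{Vol}(\Omega)\lambda^{n/2}$, exactly as in the proof of Corollary \ref{CorPrinc}.

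The bookkeeping that produces the exact constant $K_{n,p}$ in \eqref{Const} is where one has to be careful: the integral $\int_0^{j_{n/2-1}} r^{p-np/2+n-1} J_{n/2-1}^p(r)\,dr$ and the factor $\Gamma(n/2)^{-1}$ strongly suggest that the sharp form of the estimate comes not from the crude Newtonian-potential bound above but from Chiti's reverse Hölder inequality (the approach attributed to G. Chiti mentioned in the abstract), in which the extremal configuration is a ball on which the eigenfunction equals the radial Bessel profile $r^{1-n/2}J_{n/2-1}(r)$ truncated at the first zero $j_{n/2-1}$. So the refined argument is: reduce to a ball by Faber–Krahn and rearrangement, solve the model problem on the ball explicitly in terms of Bessel functions, and read off $K_{n,p}$ from the $L^p$–$L^\infty$ ratio of the Bessel profile. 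The main obstacle I anticipate is precisely making the maximum-principle/comparison step tight enough that no loss is incurred in the constant — controlling the general divergence-form operator $L$ by the Laplacian without slack, and ensuring the rearrangement inequality is applied to the exact quantity $\int G(x,\cdot)^{p'}$ rather than to a cruder surrogate — since any inefficiency there would destroy the sharp value \eqref{Const}. Verifying that the stated $K_{n,p}$ is genuinely attained (the sharpness claim) then amounts to checking the ball case produces equality throughout.
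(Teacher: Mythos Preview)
Your initial Green-function/maximum-principle strategy is the machinery behind Lemma~\ref{lemme2} and Theorem~\ref{lemme1}, not behind Theorem~\ref{Th1}. The paper's proof of Theorem~\ref{Th1} is much shorter and uses none of that: it simply quotes Chiti's reverse H\"older inequality (Proposition~\ref{ThChiti}, giving $\|u\|_\infty \le K_{n,p}\lambda^{n/(2p)}\|u\|_p$) as a black box, applies it on each nodal domain $A_i$ to the first Dirichlet eigenfunction $u_i = u_\lambda|_{A_i}$, and sums. What you describe in your final paragraph as ``the refined argument'' needed to recover the exact constant is in fact the \emph{entire} proof; the Newtonian-potential estimate and the Faber--Krahn insertion are never invoked.

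There is also a concrete discrepancy in your handling of \eqref{P1}. You propose to deduce the $p=1$ bound from the $p=2$ bound via Cauchy--Schwarz against the Courant/Weyl estimate $|\mathcal{A}(u_\lambda)| \le C_n \operatorname{Vol}(\Omega)\lambda^{n/2}$. That gives the correct exponents of $\lambda$ and $\operatorname{Vol}(\Omega)$ but produces the constant $C_n^{1/2}K_{n,2}$ rather than $K_{n,1}$, so the statement as written is not recovered. The paper instead applies Chiti's inequality with $p=1$ directly, sums to obtain $\sum_i m_{A_i} \le K_{n,1}\lambda^{n/2}\|u_\lambda\|_{L^1(\Omega)}$, and then bounds $\|u_\lambda\|_{L^1(\Omega)} \le \operatorname{Vol}(\Omega)^{1/2}\|u_\lambda\|_{L^2(\Omega)} = \operatorname{Vol}(\Omega)^{1/2}$ by Cauchy--Schwarz. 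This is the step that yields the stated constant.
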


The main tool to prove Theorem \ref{Th1} is Chiti's reverse Hölder inequality satisfied by any elliptic operator in divergence form with Dirichlet boundary conditions.

\begin{remark}
Since Theorem \ref{Th1} can be applied to general elliptic operators such as the Laplace-Beltrami operator in local coordinates as defined in \eqref{local}, it can also be used with a Laplacian eigenfunction on compact Riemannian manifolds provided that all its nodal domains can always be included in a single chart of $M$.
\end{remark}

\begin{remark}
A notable feature of \cite[Theorem 1.3]{PS} is that the bounds on the distribution of the nodal extrema hold for a larger class of functions defined on compact surfaces, including eigenfunctions associated to the bi-laplacian clamped plate problem. Both approaches can not be extended to the bi-laplacian case since they rely on the maximum principle, which is known not to hold for such operators.

%The first approach is based on an estimate on superlevel sets of eigenfunctions. The last part of the proof of Theorem \ref{lemme1} uses the maximum principle, which is no longer valid with fourth order operator such as the bi-laplacian.
%
%% long version of that paragraph (which one do you prefer?)
%% The first approach is based on an estimate on superlevel sets of eigenfunctions. To prove such result, it is required to use bounds on the %Green's function representation, which hold for fourth order operators under certain hypotheses. Nevertheless, the last part of the %%%proof of Theorem \ref{lemme1} uses the maximum principle, which is no longer valid in such a case.
%
%The second approach is based on Chiti reverse Hölder inequalities. Even if Chiti's results were extendable to the bi-Laplacian, in order to use these results to prove an assertion related to nodal extrema, the fact that the first eigenfunction does not change sign over its domain is used. However, it is known not to be always true for the clamped plate problem (should I put a reference?). This is related to the fact that the maximum principle does not hold for fourth order operators.

\end{remark}

\subsection{Neumann boundary conditions in the planar case}

Let $\Omega$ be a bounded planar domain with piecewise analytic boundary. We consider the Neumann eigenvalue problem on $\Omega$, namely

\begin{equation} \label{Neumann}
 \left\{
  \begin{array}{l l}
    \Delta u =  \mu u \mbox{ in } \Omega, \\
     \frac{\partial u}{\partial n} =0 \mbox{ on } \partial \Omega.\\
   \end{array} \right.
\end{equation}

Using an argument of \cite{Polt} based on a result of \cite{TZ}, it is possible to bound the number of nodal domains touching the boundary of $\Omega$ by $C_\Omega \sqrt{\mu}$. By doing so, it is an easy matter to obtain the following:

\begin{theorem}\label{Neum}

Let $\Omega$ be a bounded planar domain with piecewise analytic boundary, then there exists $C_\Omega>0$ and $K_\Omega>0$  such that
\begin{equation}\label{eqN1}
\sum_{i=1}^{|\mathcal{A}(u_\mu)|} m_{A_i} \leq   C_\Omega  \mu,
\end{equation}
and
\begin{equation}\label{eqN2}
\sum_{i=1}^{|\mathcal{A}(u_\mu)|} m_{A_i}^2 \leq   K_\Omega \mu.
\end{equation}
\end{theorem}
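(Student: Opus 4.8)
The plan is to reduce the Neumann problem on the planar domain $\Omega$ to a situation where the Dirichlet-type bounds of Theorem \ref{Th1} (or equivalently Theorem \ref{Th2} with $n=2$) apply, treating separately the nodal domains that meet $\partial\Omega$ and those that do not. First I would split the collection $\mathcal{A}(u_\mu)$ into two families: the \emph{interior} nodal domains $A_i$ with $\closure{A_i}\cap\partial\Omega=\emptyset$, and the \emph{boundary} nodal domains that touch $\partial\Omega$. On an interior nodal domain, $u_\mu$ vanishes on $\partial A_i$, so $u_{\mu,i}$ is a genuine Dirichlet eigenfunction of $\Delta$ on $A_i\subset\mathbb{R}^2$ with eigenvalue $\mu$; hence the contribution of all interior nodal domains to $\sum m_{A_i}$ and $\sum m_{A_i}^2$ is controlled by the right-hand sides of \eqref{P1} and \eqref{P2} with $n=2$, giving the bounds $C_\Omega\mu$ and $K_\Omega\mu$ respectively (note $\lambda^{n/2}=\mu$ when $n=2$).

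For the boundary nodal domains, the key input is the cited fact, obtained from the argument of \cite{Polt} resting on \cite{TZ}, that their number is at most $C_\Omega\sqrt{\mu}$. On each such domain one still has a pointwise bound on $m_{A_i}$ coming from the global $L^\infty$ estimate for Neumann eigenfunctions on a piecewise-analytic planar domain, namely $\|u_\mu\|_{L^\infty(\Omega)}\le k_\Omega\mu^{1/4}$ (this is the Hörmander–Levitan–Avakumović bound with $n=2$, valid up to the boundary for such $\Omega$). Therefore the boundary nodal domains contribute at most $(C_\Omega\sqrt{\mu})\cdot k_\Omega\mu^{1/4}=k_\Omega'\mu^{3/4}$ to $\sum m_{A_i}$, which is lower order than $\mu$, and at most $(C_\Omega\sqrt{\mu})\cdot k_\Omega^2\mu^{1/2}=k_\Omega''\mu$ to $\sum m_{A_i}^2$. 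Summing the interior and boundary contributions yields \eqref{eqN1} and \eqref{eqN2}.

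I expect the main obstacle to be a technical point rather than a conceptual one: namely verifying that Theorem \ref{Th1} (or Theorem \ref{Th2}) genuinely applies to the interior nodal domains. Theorem \ref{Th1} is stated for a fixed Euclidean domain with Dirichlet data, and its constant is uniform, so the subtlety is only to observe that each interior $A_i$ is such a domain with $\mu$ its first Dirichlet eigenvalue (being a nodal domain, $u_{\mu,i}$ is positive there, hence a ground state), and that the constants $K_{2,1},K_{2,2}$ are independent of which $A_i$ we use and of $\mu$. A secondary point to check is that the $L^\infty$ bound and the counting bound $C_\Omega\sqrt{\mu}$ for boundary nodal domains are available in exactly the form needed for piecewise-analytic $\partial\Omega$; both are standard but should be cited carefully. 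Once these are in place the estimate is a one-line addition of the two contributions.
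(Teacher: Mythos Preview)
Your proposal is correct and matches the paper's proof essentially line for line: the paper likewise splits into interior nodal domains (handled via Chiti's inequality as in Theorem \ref{Th1}) and boundary nodal domains (bounded in number by $C_\Omega\sqrt{\mu}$ via \cite{Polt,TZ} and in height by the H\"ormander--Levitan--Avakumovi\'c bound $m_{A_i}\le C\mu^{1/4}$), then sums the two contributions. The technical points you flag---uniformity of the Chiti constants over the interior $A_i$ and the validity of the $L^\infty$ bound up to the boundary---are exactly the ingredients the paper takes for granted.
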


\subsection{Manifolds with Dirichlet boundary conditions}

In order to obtain similar results for manifolds with boundary conditions, one has to use Sogge-Smith's adapted bounds for such setting (see \cite{SS}). For the sake of clarity, we recall these results here.

Let $(M^n,g)$ be a compact Riemannian manifold with boundary. Let $u_\lambda$ denote a Dirichlet eigenfunction associated to $\lambda$, then there exists $k_g>0$ such that
\begin{equation}
||u_\lambda||_p \leq k_g \lambda^{\frac{n}{2}(\frac{1}{2}-\frac{1}{p})-\frac{1}{4}} ||u_\lambda||_2,
\end{equation}
for $p\geq 4$ if $n \geq 4$, and $p \geq 5$ if $n=3$. One can easily adapt the proof of Theorem \ref{Th2} using Sogge-Smith results to get :

\begin{theorem}\label{Th3}
Let $(M^n,g)$ be a compact Riemannian manifold with boundary. If $\lambda$ is large enough,  there exists $k_g>0$  such that
\begin{equation}\label{Th3eq1}
\sum_{i=1}^{|\mathcal{A}(u_\lambda)|} m_{A_i} \leq k_g \lambda^{\frac{n}{2}},
\end{equation}
and
\begin{equation}\label{Th3eq2}
\sum_{i=1}^{|\mathcal{A}(u_\lambda)|} m_{A_i}^2 \leq k_g \lambda^{\frac{n}{2}}.
\end{equation}
Moreover, we have the following
\begin{equation}\label{Th3eq3}
\sum_{i=1}^{|\mathcal{A}(u_\lambda)|} m_{A_i}^p \leq k_g \lambda^{\frac{n}{2} + \frac{np}{2}(\frac{1}{2}-\frac{1}{p})-\frac{p}{4} },
\end{equation}
for any $p \geq 4$  if $n \geq 4$, and $p \geq 5$ if $n=3$.
\end{theorem}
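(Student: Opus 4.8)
The plan is to run the argument behind Theorem \ref{Th2} almost verbatim, substituting the Sogge--Smith estimates recalled above for Sogge's $L^p$ bounds. Fix $\delta\in(0,1)$. For every nodal domain $A_i\in\mathcal A(u_\lambda)$ the superlevel set $V^i_\delta=\{x\in A_i:|u_\lambda(x)|\geq\delta\,m_{A_i}\}$ satisfies, by Theorem \ref{lemme1}, $\operatorname{Vol}_g(V^i_\delta)\geq k_{g,\delta,\lambda_0}\lambda^{-n/2}$ for all $\lambda\geq\lambda_0$. Since the sets $V^i_\delta$ are contained in pairwise disjoint nodal domains and $|u_\lambda|\geq\delta\,m_{A_i}$ on $V^i_\delta$, summing the elementary inequality $\int_{V^i_\delta}|u_\lambda|^q\,d\sigma\geq(\delta m_{A_i})^q\operatorname{Vol}_g(V^i_\delta)$ over $i$ gives, for any $q\geq1$,
\begin{equation*}
\delta^q\,k_{g,\delta,\lambda_0}\,\lambda^{-n/2}\sum_i m_{A_i}^q\;\leq\;\sum_i\int_{V^i_\delta}|u_\lambda|^q\,d\sigma\;\leq\;\|u_\lambda\|_q^q .
\end{equation*}

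From here the three inequalities come out by feeding in the right value of $q$. First, taking $q=2$ and using the normalization $\|u_\lambda\|_2=1$ immediately yields \eqref{Th3eq2}, with no $L^p$ bound needed at all. Next, \eqref{Th3eq1} follows from \eqref{Th3eq2} combined with the Courant--Weyl bound $|\mathcal A(u_\lambda)|\leq k_g\lambda^{n/2}$ (which holds in the Dirichlet setting as well) via Cauchy--Schwarz, exactly as in the proof of Corollary \ref{CorPrinc}. Finally, for \eqref{Th3eq3} one takes $q=p$ and inserts the Sogge--Smith inequality $\|u_\lambda\|_p\leq k_g\,\lambda^{\frac n2(\frac12-\frac1p)-\frac14}$; rearranging produces $\sum_i m_{A_i}^p\leq k_g'\,\lambda^{\frac n2+\frac{np}{2}(\frac12-\frac1p)-\frac p4}$, which is the claimed estimate after renaming the constant. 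The admissible range $p\geq4$ for $n\geq4$ and $p\geq5$ for $n=3$ is imposed solely by the validity range of the Sogge--Smith bound in this form.

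The one point that is genuinely new relative to the closed case --- and the step I expect to be the main obstacle --- is checking that Theorem \ref{lemme1} can be invoked for Dirichlet eigenfunctions, in particular for nodal domains whose closure meets $\partial M$. For interior nodal domains nothing changes. For a boundary nodal domain $A_i$ one solves the Poisson problem $\Delta w=-\lambda\chi_{V^i_\delta}u_{\lambda,i}$ on $M$ with zero Dirichlet data on $\partial M$; because $u_{\lambda,i}$, extended by $0$ outside $A_i$, already vanishes on the portion of $\partial M$ it touches, the specific linear combination of $u_{\lambda,i}$ and $w$ used in the proof of Theorem \ref{lemme1} still satisfies the boundary condition, so the maximum principle applies, and the upper bound on $w$ coming from the Green function estimate together with Lemma \ref{rearrangement} goes through unchanged. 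One also has to ensure that $\lambda_0$ and $k_{g,\delta,\lambda_0}$ can be chosen uniformly over all nodal domains, which again follows from the compactness of $M$ exactly as in the boundaryless case. Aside from this, the proof is a mechanical transcription of that of Theorem \ref{Th2}.
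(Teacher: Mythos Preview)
Your proposal is correct and follows exactly the route the paper intends: it states that Theorem~\ref{Th3} is obtained by adapting the proof of Theorem~\ref{Th2} with the Sogge--Smith bounds in place of Sogge's, and your argument does precisely this. Your derivations of \eqref{Th3eq2} (take $q=2$) and \eqref{Th3eq3} (take $q=p$ and plug in Sogge--Smith) are identical to the paper's template; for \eqref{Th3eq1} you use the Cauchy--Schwarz\,/\,Courant--Weyl trick from Corollary~\ref{CorPrinc}, whereas the paper's proof of Theorem~\ref{Th2} obtains the $p=1$ case by bounding $\|u_\lambda\|_1\le\operatorname{Vol}_g(M)^{1/2}$ directly---either route works and both appear in the paper.

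Your discussion of the boundary issue is the right instinct, but it can be simplified. The maximum principle in the proof of Theorem~\ref{lemme1} is applied on $V^i_\delta$, not on $M$; and since $u_\lambda=0$ on $\partial M$ while $|u_\lambda|\ge\delta\,m_{A_i}>0$ on $V^i_\delta$, the set $V^i_\delta$ is automatically contained in the interior of $M$, so $\partial V^i_\delta$ never meets $\partial M$. Thus one solves the Poisson problem with the Dirichlet Green function on $M$ (which satisfies the same pointwise upper bound $G(x,x_0)\le C_g\,\rho(x,x_0)^{2-n}$), and the rest of Proposition~\ref{proptech2} and Theorem~\ref{lemme1} goes through verbatim. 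No separate boundary-matching for the linear combination is needed.
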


In \cite{SS}, it is conjectured that the following bound holds:
$$||u||_p \leq C \lambda^{\alpha(p)}||u||_2,$$ where
\begin{equation}
\text{$\alpha(p) = \left\{
  \begin{array}{l l}
      \left(\frac{2}{3}+\frac{n-2}{2}\right) \left(\frac{1}{4}-\frac{1}{2p}\right), & \quad \text{$2 \leq p \leq \dfrac{6n+4}{3n-4},$}\\
     \frac{n}{2}\left(\frac{1}{2}-\frac{1}{p}\right)-\frac{1}{4}, & \quad \text{$\dfrac{6n+4}{3n-4} \leq p \leq +\infty$.}\\
   \end{array} \right.$}
\end{equation}

Hence, a version of Theorem \ref{Th3} without the restrictions could be obtained if one showed these latter bounds :

\begin{conjecture}
Let $(M^n,g)$ be a manifold with boundary. If $\lambda$ is large enough, then there exists $k_g>0$ such that
\begin{equation}\label{Th2eq3}
\sum_{i=1}^{|\mathcal{A}(u_\lambda)|} m_{A_i}^p \leq k_g \lambda^{\frac{n}{2} + p \alpha(p)},
\end{equation}
for any $p \geq 2$.
\end{conjecture}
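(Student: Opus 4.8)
The plan is to show that the conjectured inequality \eqref{Th2eq3} would follow verbatim from the conjectured Sogge--Smith $L^p$ bounds $\|u\|_p \leq k_g \lambda^{\alpha(p)}\|u\|_2$, by running exactly the argument used to deduce Theorem \ref{Th2} from Sogge's estimates (and Theorem \ref{lemme1}). In other words, the content of the conjecture is purely the $L^p$ bound; the passage from that bound to a bound on $\sum_i m_{A_i}^p$ is a formal consequence of the lower volume estimate for superlevel sets.

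Concretely, I would fix $\delta\in(0,1)$, say $\delta=1/2$, and for each $A_i\in\mathcal{A}(u_\lambda)$ pick $x_i\in A_i$ with $|u_\lambda(x_i)|=m_{A_i}$, so that on $V^i_\delta=\{x\in A_i:|u_\lambda(x)|\geq\delta m_{A_i}\}$ one has $|u_\lambda|^p\geq \delta^p m_{A_i}^p$ pointwise. By Theorem \ref{lemme1}, for $\lambda\geq\lambda_0$ we have $\operatorname{Vol}_g(V^i_\delta)\geq k_{g,\delta,\lambda_0}\lambda^{-n/2}$ for every $i$, hence
\begin{equation*}
\int_{V^i_\delta}|u_\lambda|^p\,d\sigma \;\geq\; \delta^p m_{A_i}^p\operatorname{Vol}_g(V^i_\delta)\;\geq\; \delta^p k_{g,\delta,\lambda_0}\,m_{A_i}^p\,\lambda^{-n/2}.
\end{equation*}
Since the nodal domains are pairwise disjoint and $V^i_\delta\subset A_i$, summing over $i$ gives
\begin{equation*}
\delta^p k_{g,\delta,\lambda_0}\,\lambda^{-n/2}\sum_i m_{A_i}^p \;\leq\; \sum_i\int_{A_i}|u_\lambda|^p\,d\sigma \;\leq\; \int_M |u_\lambda|^p\,d\sigma \;=\;\|u_\lambda\|_p^p.
\end{equation*}
Invoking the (conjectured) bound $\|u_\lambda\|_p\leq k_g\lambda^{\alpha(p)}$, valid here because $\|u_\lambda\|_2=1$, and absorbing $\delta^{-p}$, $k_{g,\delta,\lambda_0}^{-1}$, $k_g^p$ into a single constant, one obtains $\sum_i m_{A_i}^p\leq k_g\lambda^{\frac{n}{2}+p\alpha(p)}$, which is \eqref{Th2eq3}. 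Note this works simultaneously for every $p\geq 2$ in the range where the $L^p$ bound is assumed.

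The reduction above is entirely routine, and I expect no obstacle there; the sole genuine difficulty is the hypothesis, i.e.\ establishing the Sogge--Smith $L^p$ bounds with exponent $\alpha(p)$ for Dirichlet eigenfunctions on a manifold with boundary throughout the full range $2\leq p\leq+\infty$. This is an open problem: the boundary obstructs the parametrix constructions that produce Sogge's sharp exponents in the closed case, and the behaviour of eigenfunctions in the $O(\lambda^{-1/2})$ boundary layer is delicate, with whispering-gallery and bouncing-ball modes saturating different exponents than in the boundaryless setting. Once such bounds are known in full generality, Theorem \ref{Th3} extends to all $p\geq 2$ exactly as sketched here, with no further geometric input beyond Theorem \ref{lemme1}.
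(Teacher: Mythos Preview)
Your proposal is correct and matches the paper's own treatment: this statement is a \emph{conjecture}, not a theorem, and the paper does not prove it. The paper simply remarks that ``a version of Theorem~\ref{Th3} without the restrictions could be obtained if one showed these latter bounds,'' i.e., exactly your observation that the inequality~\eqref{Th2eq3} follows from the conjectured Sogge--Smith $L^p$ bounds by the same superlevel-set argument (Theorem~\ref{lemme1}) used for Theorem~\ref{Th2}.
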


We also obtain a generalization of \cite[Corollary 1.7]{PS} in the case of manifolds with boundary. Using \eqref{Th3eq3} with $p=\frac{6n+4}{3n-4}$, we get the following:

\begin{corollary}
Given $a>0$, consider nodal domains such that $m_{A_i} \geq a \lambda^{\frac{n-1}{4}}$. If $\lambda$ is large enough, then there exists $k_g >0$ such that the number of such nodal domains does not exceed $k_g a^{-\frac{6n+4}{3n-4}}$. In particular, for fixed $a$, it remains bounded as $\lambda \to \infty$.
\end{corollary}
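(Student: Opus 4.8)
The plan is to derive this directly from the $L^{p}$ nodal-extrema bound \eqref{Th3eq3}, repeating the Chebyshev-type argument already used to deduce the closed-manifold version (with \eqref{Th3eq3} in place of \eqref{Th2eq2}). Fix $a>0$ and let $N_\lambda$ denote the number of nodal domains $A_i\in\mathcal A(u_\lambda)$ with $m_{A_i}\ge a\lambda^{\frac{n-1}{4}}$; the quantity to bound is $N_\lambda$. First I would note that for any exponent $p>0$ each such domain contributes at least $a^{p}\lambda^{\frac{(n-1)p}{4}}$ to the sum $\sum_i m_{A_i}^{p}$, and, discarding the remaining nonnegative terms,
\[
N_\lambda\, a^{p}\,\lambda^{\frac{(n-1)p}{4}}\ \le\ \sum_{i=1}^{|\mathcal A(u_\lambda)|} m_{A_i}^{p}.
\]
This elementary inequality is the only place the hypothesis $m_{A_i}\ge a\lambda^{(n-1)/4}$ is used.

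Next I would specialize to $p=\frac{6n+4}{3n-4}$, the critical Sogge-Smith exponent, and bound the right-hand side by \eqref{Th3eq3}, so that for all $\lambda$ large enough
\[
N_\lambda\, a^{p}\,\lambda^{\frac{(n-1)p}{4}}\ \le\ k_g\,\lambda^{\frac n2+\frac{np}{2}\left(\frac12-\frac1p\right)-\frac p4}.
\]
The key (routine) computation is to observe that the two powers of $\lambda$ coincide: since $\frac{np}{2}\left(\frac12-\frac1p\right)=\frac{np}{4}-\frac n2$, one gets $\frac n2+\frac{np}{2}\left(\frac12-\frac1p\right)-\frac p4=\frac{(n-1)p}{4}$. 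Cancelling the common factor $\lambda^{(n-1)p/4}$ leaves $N_\lambda\, a^{p}\le k_g$, that is $N_\lambda\le k_g\,a^{-\frac{6n+4}{3n-4}}$; since this bound does not involve $\lambda$, for fixed $a$ the count persists as $\lambda\to\infty$, which is the final assertion.

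The one delicate point — and the nearest thing to an obstacle here — is the admissibility of the exponent: \eqref{Th3eq3} is established only for $p\ge4$ when $n\ge4$ and $p\ge5$ when $n=3$, whereas $\frac{6n+4}{3n-4}$ can fall below these thresholds. In those dimensions one invokes instead the conjectured bound \eqref{Th2eq3}, valid for all $p\ge2$, which at $p=\frac{6n+4}{3n-4}$ produces the identical exponent $\frac n2+p\alpha(p)=\frac{(n-1)p}{4}$, the two branches of $\alpha(p)$ agreeing at that value. Once the $L^{p}$ estimate is available at the critical exponent, the rest of the proof is immediate.
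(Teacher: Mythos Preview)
Your argument is exactly the paper's: apply \eqref{Th3eq3} at the critical exponent $p=\frac{6n+4}{3n-4}$, note that the resulting power of $\lambda$ equals $\frac{(n-1)p}{4}$, and cancel. Your caveat about admissibility is well taken --- the paper itself simply writes ``Using \eqref{Th3eq3} with $p=\frac{6n+4}{3n-4}$'' without addressing that this value of $p$ falls below the stated thresholds, so you have reproduced the intended proof and identified a gap the paper leaves implicit.
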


%\subsection{Asymptotic version of Theorem \ref{Th3}}
%
%Using Grieser's analysis of the Neumann eigenfunctions near the boundary of a manifold (see [G]), it is possible to show the following:
%
%\begin{lemma}\label{l2}
%For $\mu$ large enough, there exists $k_g$ such that
%$$ \operatorname{Vol}_g(V) \geq k_g \mu^{-n/2}. $$
%\end{lemma}
%
%It is then an easy matter to obtain an asymptotic version of Theorem \ref{Th3}, mimicking its proof using Lemma \ref{l2}.
%
%\begin{proof}
%
%Let $I_1$ denote the family of indexes of nodal domains touching the boundary of $\Omega$ and let $I_2 = |\mathcal{A}(u_\mu)| \setminus I_1$.  For nodal domains whose index is in $I_2$, we can use Proposition XXX since $=0$ on $\partial A_i$. However, it is not the case for nodal domains whose index is in $I_1$. The boundary conditions in such nodal domain are mixed : Dirichlet on $\partial A_i \setminus \partial \Omega$, and Neumann on $\partial \Omega \cap \partial A_i$. However, for $\mu$ large enough, [G, Lemma XX] yields that
%$$
%\max_{ \{x \in M : \rho(x, \partial M)  < \frac{1}{\mu} \} } |u(x)| \leq 20 \max_{ \{x \in M : \rho(x, \partial M) = \frac{1}{\mu} \} } |u(x)|.
%$$
%Therefore, the extrema can not occur on $\partial \Omega \cap \partial A_i$. Using the fact that the Green's function associated to such mixed boundary problem also verifies $G(x,y) = \rho(x,y)^{2-n} (1 + o(1))$ (ref needed; it is still true?), one can mimic the proofs of Proposition \ref{proptech2} and Theorem \ref{lemme1} to conclude.
%
%\end{proof}

\subsection{Bounds for the $p$-Laplacian}

%Note that if $p=2$, the $p$-Laplacian corresponds to the usual Laplacian and is linear (see \cite{H} for a good overview when $\Omega$ is a bounded domain in $\mathbb{R}^n$ and see \cite{Cha} for the manifold version). Otherwise, we say that the $p$-Laplacian is "half-linear" in the sense that it is $(p-1)$ homogeneous but not additive. The case $n=1$ is better understood since we have explicit solutions in terms of beta functions (see \cite{D3,E}). It is also known in that case that the set of all eigenfunctions forms an increasing sequence,
%\begin{equation} \nonumber
%0 < \lambda_{1,p} < \lambda_{2,p} < ... < \lambda_{k,p} < ... \nearrow +\infty,
%\end{equation}
%Every $\lambda_{k,p}$ is simple and admits a variational characterization. Thus, properties of the linear case extend to the non-linear one.
%
%On the other hand, it is not always the case when $n\geq2$.

For $1 < p < \infty$, the $p-$Laplacian of a function $f$ on an open bounded Euclidean domain $\Omega$ is defined by $\Delta_p f = \mbox{div}(|\nabla f|^{p-2} \nabla f).$ We consider the following eigenvalue problem:
\begin{equation}\label{pLaplacian}
  \Delta_p u + \lambda |u|^{p-2}u=0 \mbox{ in } \Omega,
\end{equation}
where we impose the Dirichlet boundary conditions. We say that $\lambda$ is an eigenvalue of $-\Delta_p$ if \eqref{pLaplacian} has a nontrivial weak solution $u_{\lambda,p} \in W^{1,p}_0(\Omega)$. That is, for any $v \in  C^\infty_0(\Omega)$,
\begin{equation}
\int_\Omega |\nabla u_\lambda|^{p-2} \nabla u_\lambda \cdot \nabla v - \lambda \int_\Omega |u_\lambda|^{p-2} u_\lambda v=0.
\end{equation}
 The function $u_\lambda$ is then called an eigenfunction of $-\Delta_p$ associated to the eigenvalue $\lambda$.  The function $u_\lambda$ is then called an eigenfunction of $-\Delta_p$ associated to $\lambda$. Note that if $p=2$, the $p$-Laplacian corresponds to the usual Laplacian and is linear. Otherwise, we say that the $p$-Laplacian is "half-linear" in the sense that it is $(p-1)$ homogeneous but not additive.

It is known that the first eigenvalue of the Dirichlet eigenvalue problem of the $p$-Laplace operator, denoted by $\lambda_{1,p}$, is characterized as,
\begin{equation}\label{var1}
\lambda_{1,p} = \min_{0 \neq u\in C^\infty_0(\Omega)} \left \{ \frac{ \int_\Omega |\nabla u|^p dx}{\int_\Omega |u|^p dx} \right \}.
\end{equation}
The infimum is attained for a function $u_{1,p} \in W^{1,p}_0(\Omega)$. In addition, $\lambda_{1,p}$ is simple and isolated. Moreover, the eigenfunction $u_{1}$  associated to $\lambda_{1,p}$ does not change sign, and it is the only such eigenfunction.
%(a proof can be found in \cite{Lind1})

Via, for instance, the Lyusternick-Schnirelmann maximum principle, it is possible to construct $\lambda_{k,p}$ for $k\geq 2$ and hence obtain an increasing sequence of so-called variational eigenvalues of \eqref{pLaplacian} tending to $+\infty$. There exist other variational characterizations of these eigenvalues. However, no matter which variational characterization one chooses, it always remains to show that all the eigenvalues obtained that way exhaust the whole spectrum of $\Delta_p$.

Less is known about nodal geometry of eigenfunctions for the $p$-Laplace operator. For instance, it is not clear if the the interior of the set $\{ x \in \Omega : u_\lambda(x)=0\}$ is empty or not for $p$-Laplacian eigenfunctions. For more details on nodal geometry of the $p$-Laplace operator, see for instance \cite{L, P1, P2}.

Nevertheless, using a $L^\infty$ bound obtained in \cite[Lemma 4.1]{Lind}, one can still obtain an extension of \eqref{P1} for the $p$-Laplace operator.
\begin{theorem}\label{Lindq}
Let $\Omega$ be a smooth bounded open set in $\mathbb R^n$. Consider $u_{p, \lambda}$ an eigenfunction of the Dirichlet $p$-Laplacian eigenvalue problem associated to the eigenvalue $\lambda$. Let $||u_{p, \lambda}||_{p,\Omega}=1$, then we have the following:
\begin{equation}\label{LindqEQ}
 \sum_{i=1}^{|\mathcal{A}(u_\lambda)|} m_{A_i} \leq  4^n  \operatorname{Vol}(\Omega)^{1-\frac{1}{p}} \lambda^{\frac{n}{p}} .
 \end{equation}
\end{theorem}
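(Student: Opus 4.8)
The plan is to localise the estimate to the individual nodal domains and then recombine by a discrete Hölder inequality. Fix a nodal domain $A_i\in\mathcal A(u_{p,\lambda})$ and set $v_i:=u_{p,\lambda}|_{A_i}$. Since $u_{p,\lambda}\in W^{1,p}_0(\Omega)$ vanishes on $\partial A_i$, the function $v_i$, extended by $0$, lies in $W^{1,p}_0(A_i)$, is a nontrivial weak solution of the Dirichlet $p$-Laplacian eigenvalue problem on $A_i$ with eigenvalue $\lambda$, and has constant sign on $A_i$. As is well known (and recalled earlier in the paper), a sign-definite eigenfunction of $-\Delta_p$ can only be associated with the first eigenvalue of its domain; hence $\lambda=\lambda_{1,p}(A_i)$, and $\tilde v_i:=v_i/\|v_i\|_{L^p(A_i)}$ is a normalised first Dirichlet eigenfunction of $-\Delta_p$ on $A_i$.

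Next, by \cite[Lemma 4.1]{Lind} --- combined, if necessary, with the Faber--Krahn inequality for $-\Delta_p$ and the scaling relation $\lambda_{1,p}(B_r)=r^{-p}\lambda_{1,p}(B_1)$ --- there is a constant $c=c(n,p)$, independent of $A_i$ and of $\lambda$, with
\[
\|\tilde v_i\|_{L^\infty(A_i)}\le c\,\lambda^{n/p^2}.
\]
Two consequences follow. First,
\[
m_{A_i}=\|v_i\|_{L^\infty(A_i)}=\|\tilde v_i\|_{L^\infty(A_i)}\,\|v_i\|_{L^p(A_i)}\le c\,\lambda^{n/p^2}\,\|v_i\|_{L^p(A_i)}.
\]
Second, $1=\int_{A_i}|\tilde v_i|^p\le\|\tilde v_i\|_{L^\infty(A_i)}^p\operatorname{Vol}(A_i)$, so $\operatorname{Vol}(A_i)\ge c^{-p}\lambda^{-n/p}$; since the $A_i$ are pairwise disjoint subsets of $\Omega$, summing this bound gives
\[
N:=|\mathcal A(u_{p,\lambda})|\le c^{p}\,\operatorname{Vol}(\Omega)\,\lambda^{n/p}.
\]

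Finally, summing the estimate for $m_{A_i}$ and applying Hölder's inequality with exponents $p$ and $p/(p-1)$, then using that the $A_i$ are disjoint and $\|u_{p,\lambda}\|_{L^p(\Omega)}=1$,
\begin{align*}
\sum_{i=1}^{N}m_{A_i}&\le c\,\lambda^{n/p^2}\sum_{i=1}^{N}\|v_i\|_{L^p(A_i)}\\
&\le c\,\lambda^{n/p^2}\,N^{1-1/p}\Big(\sum_{i=1}^{N}\int_{A_i}|u_{p,\lambda}|^p\Big)^{1/p}\le c\,\lambda^{n/p^2}\,N^{1-1/p}.
\end{align*}
Inserting the bound on $N$,
\begin{align*}
\sum_{i=1}^{N}m_{A_i}&\le c\,\lambda^{n/p^2}\big(c^{p}\operatorname{Vol}(\Omega)\,\lambda^{n/p}\big)^{1-1/p}\\
&=c^{p}\,\operatorname{Vol}(\Omega)^{1-1/p}\,\lambda^{\,n/p^2+(n/p)(1-1/p)}=c^{p}\,\operatorname{Vol}(\Omega)^{1-1/p}\,\lambda^{n/p},
\end{align*}
so that the claim follows once one checks that the explicit constant in \cite[Lemma 4.1]{Lind} makes $c^{p}\le 4^{n}$.

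I expect the main obstacle to be the passage to the nodal domains rather than the estimates themselves: one has to verify that $u_{p,\lambda}^{\pm}$ genuinely belongs to $W^{1,p}_0$ of the relevant components, that $\lambda$ really equals $\lambda_{1,p}(A_i)$, and that \cite[Lemma 4.1]{Lind} applies on the possibly very irregular open set $A_i$ --- recall that for $-\Delta_p$ it is not even known whether the nodal set $\{u_{p,\lambda}=0\}$ has measure zero. This should go through because the $L^\infty$ estimate used is an interior one and hence insensitive to the regularity of $\partial A_i$.
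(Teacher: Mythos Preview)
Your localisation to nodal domains is the right starting point and matches the paper, but the route you take after that is both more complicated than necessary and contains a genuine gap.

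\medskip

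\textbf{The gap.} You assert that Lindqvist's Lemma~4.1 together with Faber--Krahn and scaling yields
\[
\|\tilde v_i\|_{L^\infty(A_i)} \le c\,\lambda^{n/p^2}.
\]
But Lindqvist's lemma, as quoted in the paper, is an $L^\infty$--$L^1$ bound:
\[
\|u\|_{L^\infty(A_i)} \le 4^n \lambda^{n/p}\,\|u\|_{L^1(A_i)}.
\]
To pass from this to your $L^\infty$--$L^p$ bound via H\"older you would need an \emph{upper} bound on $\operatorname{Vol}(A_i)$ of order $\lambda^{-n/p}$; Faber--Krahn, however, gives only a \emph{lower} bound of that order, so the implication fails. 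The $L^\infty$--$L^p$ estimate you want is in fact true (by Moser iteration or Talenti--type comparison), but it does not follow from the ingredients you cite, and once you import it from elsewhere the final claim ``$c^p\le 4^n$'' has no justification.

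\medskip

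\textbf{Comparison with the paper.} The paper avoids all of this by using the $L^1$ form of Lindqvist's lemma directly. Since the $A_i$ are disjoint, $L^1$ norms add:
\[
\sum_i m_{A_i}\le 4^n\lambda^{n/p}\sum_i\|u_i\|_{L^1(A_i)}=4^n\lambda^{n/p}\|u_{p,\lambda}\|_{L^1(\Omega)}
\le 4^n\lambda^{n/p}\operatorname{Vol}(\Omega)^{1-1/p}\|u_{p,\lambda}\|_{L^p(\Omega)},
\]
and the last factor equals $1$. No nodal-domain count, no discrete H\"older, and the constant $4^n$ survives exactly. Your detour through $N\le c^p\operatorname{Vol}(\Omega)\lambda^{n/p}$ and the discrete H\"older inequality is unnecessary precisely because additivity of $L^1$ already packages both pieces of information.

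\medskip

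In short: keep the reduction to first eigenfunctions on nodal domains, but apply Lindqvist's bound in its $L^1$ form and sum; the argument then collapses to three lines with the stated constant.
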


Notice that if $p=2$, this result corresponds to what we expect in the case of the usual Laplace operator.

The Courant nodal theorem combined with the Weyl Law yield that the number of nodal domains of a Dirichlet eigenfunction associated to an elliptic operator $L$ does not exceed $C \lambda^{\frac{n}{2}}$. For the $p$-Laplacian case, the number of nodal domains $N_\lambda$ associated to an arbitrary eigenfunction is known to be bounded, see \cite{DR}. It is also shown in \cite{DR} that the number of nodal domains of an eigenfunction $u_{k}$ associated to a variational eigenvalue is bounded by $2k-2$.  Moreover, it is known that there exists two positive constants depending on $\Omega$ such that $c k^{p/n} \leq \lambda_{k,p} \leq C k^{p/n}$ (see \cite{AP}). Combining both results yields that $N_\lambda \leq C \lambda^{n/p}$ if $\lambda$ is a variational eigenvalue. We show that a similar result holds even for non-variational eigenvalue:
\begin{corollary}
For any eigenfunction of \eqref{pLaplacian} and any $a>0$, there exists a positive constant $C>0$ such that the number of nodal domains $A \in \mathcal{A}(f)$ with $m_A \geq a$ does not exceed $C a^{-1} \lambda^{\frac{n}{p}}$.
\end{corollary}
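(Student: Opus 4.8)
The plan is to deduce this corollary directly from Theorem~\ref{Lindq} by an elementary counting (Markov-type) argument, exactly as in the derivation of the earlier corollaries from Theorem~\ref{Th2}. First I would fix an eigenfunction $f = u_{p,\lambda}$ of \eqref{pLaplacian}; since the $p$-Laplacian is $(p-1)$-homogeneous, $cf$ is again an eigenfunction for the same $\lambda$ for every $c>0$, so there is no loss of generality in normalizing $\|f\|_{p,\Omega}=1$, which is precisely the hypothesis under which Theorem~\ref{Lindq} applies. (If one prefers to keep $f$ unnormalized, the same argument goes through after rescaling $a$ by $\|f\|_{p,\Omega}$, which only affects the constant $C$.)

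Next, let $N_\lambda(a)$ denote the number of nodal domains $A\in\mathcal{A}(f)$ with $m_A\geq a$. Since each such domain contributes at least $a$ to the sum $\sum_i m_{A_i}$, and all the $m_{A_i}$ are non-negative, we have
\begin{equation*}
a\, N_\lambda(a) \;\leq\; \sum_{\{A:\,m_A\geq a\}} m_A \;\leq\; \sum_{i=1}^{|\mathcal{A}(f)|} m_{A_i}.
\end{equation*}
Applying the bound \eqref{LindqEQ} of Theorem~\ref{Lindq} to the right-hand side yields
\begin{equation*}
a\, N_\lambda(a)\;\leq\; 4^n\,\operatorname{Vol}(\Omega)^{1-\frac1p}\,\lambda^{\frac np},
\end{equation*}
so that $N_\lambda(a)\leq C a^{-1}\lambda^{n/p}$ with $C = 4^n\operatorname{Vol}(\Omega)^{1-1/p}$, a constant depending only on $n$, $p$ and $\Omega$. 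This is the claimed estimate, and for fixed $a$ it gives $N_\lambda(a) = O(\lambda^{n/p})$, matching the growth rate read off from $c k^{p/n}\leq \lambda_{k,p}\leq Ck^{p/n}$ for variational eigenvalues, but without requiring $\lambda$ to be variational.

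There is essentially no obstacle at the level of the corollary itself: it is an immediate consequence of Theorem~\ref{Lindq} via the trivial inequality above, in complete analogy with the passage from Theorem~\ref{Th2} to the corollary on nodal domains with $m_{A_i}\geq a\lambda^{(n-1)/4}$. The only point requiring care is the normalization convention, since the statement is only meaningful once a scaling for $f$ is fixed, and the $L^p$-normalization inherited from Theorem~\ref{Lindq} is the natural choice. The substantive input — the $L^\infty$ estimate for $p$-Laplacian eigenfunctions from \cite[Lemma 4.1]{Lind} together with the bathtub/rearrangement machinery underlying the proof of Theorem~\ref{Lindq} — has already been established, so no new ideas are needed here.
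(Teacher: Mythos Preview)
Your proof is correct and follows exactly the same Markov-type counting argument as the paper: bound $aN_\lambda$ by $\sum m_{A_i}$ and then invoke \eqref{LindqEQ}. Your version is simply more detailed about the normalization and the explicit constant; the only minor inaccuracy is the aside that ``bathtub/rearrangement machinery'' underlies Theorem~\ref{Lindq}, whereas that proof rests solely on Lindqvist's $L^\infty$ bound.
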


Indeed, letting $N_\lambda$ denote the number of such nodal domains, using $\eqref{LindqEQ}$, we have that
$$ N_\lambda a  \leq \sum^{N_\lambda}_{i=1} m_{A_i} \leq C \lambda^{\frac{n}{p}},$$
yielding the conclusion.

%{\bfseries I AM NOT SURE IT CAN BE DONE. I NEED TO DISCUSS IT  WITH YOU. It is also possible to obtain a certain generalization of Theorem \ref{Th1} in the planar case:
%
%\begin{proposition}\label{PropL1}
%Let $\Omega$ be a planar domain. Then, there exist $C_1>0$ and $C_2>0$ such that
%\begin{equation}
%\sum_{i=1}^{|\mathcal{A}(u_\lambda)|} m_{A_i} \leq   C_1 \operatorname{Vol}(\Omega)^{1-\frac{1}{p}} \lambda^{\frac{2}{p}},
%\end{equation}
%and
%\begin{equation}
%\sum_{i=1}^{|\mathcal{A}(u_\lambda)|} m_{A_i}^p \leq   C_2 \lambda^{\frac{2}{p}}.
%\end{equation}
%\end{proposition}
%
% Proposition \ref{PropL1} follows from lower bounds for the inner radius obtained in \cite[Theorem 2.7]{P} in order to obtain  generalization of Theorem \ref{lemme1} to case of the $p$-Laplace operator. DETAILS NOT YET DONE}

%\begin{lemma}\label{lemme2}
%Consider $A \subset M$. Consider $u_1$ the first Dirichlet eigenfunction on $A$. Define $ A(t) := \{ x \in A : |u_1(x)| \geq t \}$. Then, there exist $t>0$ and $k_g>0$ depending only on the metric $g$ such that
%\begin{equation}\label{lemme2eq}
%\big| A(t) \big| \geq k_g \lambda^{-\frac{n}{2}}.
%\end{equation}
%\end{lemma}
%
%An easy consequence of the Lemma \cite{lemme2} would be the following:

\subsection{Structure of the paper}
%In section 2, we proved some generalization of Theorem A in $\mathbb{R}^n$ for elliptic operators. In section 3, we give a generalization to manifolds. However, the result is no longer valid for a large family of smooth functions on $M$, but rather for eigenfunctions of the Laplace-Beltrami operator. Proofs are presented in sections 4 and 5.

In Section \ref{Sproofs}, we prove the main results, namely we start with Lemma \ref{lemme2} in $\mathbb{R}^n$ and then we prove Theorem \ref{lemme1} for arbitrary compact Riemannian manifolds. This leads to the proof of Theorem \ref{Th2} which is an application of Theorem \ref{lemme1}. In Section \ref{S4}, we prove Theorems \ref{Th1}, \ref{Neum} and \ref{Lindq}.

\section{Proofs of main results}\label{Sproofs}

\subsection{Proof of Lemma \ref{lemme2}}

Before proving Theorem \ref{lemme1} that holds for compact Riemannian manifolds, we give a proof of such result in the Euclidean case to give the intuition behind the proof more clearly.

In order to prove Lemma \ref{lemme2}, we need a technical result concerning Poisson equation. Let $\Omega\subset\mathbb R^n, n \geq 3$, denote a bounded domain of $\mathbb R^n$ and consider the following problem:
\begin{equation} \label{poisson}
\Delta w = f \chi_\Omega \qquad \mbox{ in } \mathbb R^n,
\end{equation}
where $\chi_\Omega$ is the characteristic function of $\Omega$ and $||f(x)||_{L^\infty(\Omega)} = 1$. It is well known that the solution of such problem is given by $w(x) = (f\chi_\Omega \ast \Phi)(x)$, where $\Phi (x-y) = \frac{1}{n(n-2)\alpha_n} |x-y|^{2-n}$ is the fundamental solution of the Laplace operator.

\begin{proposition}\label{proptech1}
Let $\Omega\subset\mathbb R^n, n \geq 3$ and $||f(x)||_{L^\infty(\Omega)} = 1$. Then, we have that
$$ ||w||_{L^\infty(\Omega)} \leq \frac{1}{2(n-2)\alpha_n^{\frac{2}{n}}} \operatorname{Vol}(\Omega)^{2/n}.$$
Moreover, equality holds if $f\equiv 1$ and if $\Omega$ is a ball.
\end{proposition}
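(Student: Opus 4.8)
The plan is to bound $w$ pointwise directly from its Newtonian potential representation, and then to maximise the resulting integral over all admissible sets $\Omega$ by a rearrangement (Bathtub) argument; the equality case will come from tracking when the rearrangement step is sharp.

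First, since $w=(f\chi_\Omega)\ast\Phi$ with $\Phi(z)=\frac{1}{n(n-2)\alpha_n}|z|^{2-n}$ and $\|f\|_{L^\infty(\Omega)}=1$, for every $x\in\mathbb R^n$, bounding $|f|$ by $1$ gives
\[
|w(x)|\le\frac{1}{n(n-2)\alpha_n}\int_\Omega|x-y|^{2-n}\,dy .
\]
Fix $x$. The weight $y\mapsto|x-y|^{2-n}$ is non-negative and, since $n\ge 3$, strictly decreasing in $|x-y|$, with superlevel sets exactly the balls centred at $x$. Hence the Bathtub principle (Lemma~\ref{rearrangement} in its Euclidean version, or \cite[Theorem~1.14]{LL}) shows that, among sets of volume $\operatorname{Vol}(\Omega)$, this integral is largest on the ball $B_R(x)$ with $\alpha_n R^n=\operatorname{Vol}(\Omega)$. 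A one-line polar-coordinates computation then yields $\int_{B_R(x)}|x-y|^{2-n}\,dy=\frac{n\alpha_n}{2}R^2$, so that
\[
|w(x)|\le\frac{1}{n(n-2)\alpha_n}\cdot\frac{n\alpha_n}{2}R^2=\frac{R^2}{2(n-2)}=\frac{1}{2(n-2)\alpha_n^{2/n}}\operatorname{Vol}(\Omega)^{2/n}.
\]
Taking the supremum over $x\in\Omega$ gives the asserted bound.

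For the equality statement, take $f\equiv 1$ and $\Omega=B_R(x_0)$. Then $w$ is, up to sign, the Newtonian potential of $\chi_{B_R(x_0)}$, hence radial about $x_0$; since $\Delta w$ is constant on $B_R(x_0)$, the radial equation $(\rho^{n-1}w')'=\mp\rho^{n-1}$ together with smoothness of $w$ at $\rho=0$ forces $w'(\rho)=\mp\rho/n$, so $|w|$ is monotone in $\rho=|x-x_0|$ and $\|w\|_{L^\infty(B_R(x_0))}=|w(x_0)|$. At the centre the rearrangement inequality above is an equality (the integration set is literally $B_R(x_0)$), whence $|w(x_0)|=\frac{1}{2(n-2)\alpha_n^{2/n}}\operatorname{Vol}(B_R(x_0))^{2/n}$, as claimed.

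I do not anticipate a genuine obstacle here. The only point needing slight care is to invoke the rearrangement inequality in the form where the set $\Omega$ varies against a fixed radial weight, and to note that the singularity of $r^{2-n}$ at $r=0$ is harmless because that integral converges for $n\ge 3$.
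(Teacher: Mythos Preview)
Your proof is correct and follows essentially the same route as the paper: bound $|f|$ by $1$, then replace $\Omega$ by the ball of equal volume via a rearrangement argument (the paper cites the Hardy--Littlewood inequality $\int uv\le\int u^\star v^\star$ and then remarks that this step is precisely Lemma~\ref{rearrangement}, which you invoke directly), and finally compute the Newtonian potential of a ball at its centre. Your treatment of the equality case is in fact slightly more careful than the paper's, since you verify via the radial ODE that $|w|$ actually attains its maximum at the centre rather than just computing $w$ there.
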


Before we give a proof, we give a quick overview of classical rearrangements of functions. Let $u$ be a measurable function defined on an open set $\Omega$. We can form the distribution function of $u$, denoted by $\mu(t)$,  the decreasing rearrangement of $u$, $u^*(s)$ into $[0,+\infty]$ and the spherically symmetric rearrangement of $u$, $u^\star$. The distribution function of u
$$\mu(t) = \operatorname{meas}\{ x \in \Omega : |u(x)| = t \} ,$$
is a right-continuous function of $t$, decreasing from $\mu(0)= |\operatorname{supp}(u)|$ to $\mu(+\infty)=0$ as $t$ increases. The decreasing rearrangement of $u$, a positive, left continuous function into $[0,+\infty]$, is defined as
$$
u^*(s) = \inf \{ t \geq 0 : \mu(t) < s \}.
$$
The spherically symmetric rearrangement of $u$ is a function $u^{\star}$ from $\mathbb{R}^n$ into $[0,+\infty]$ whose level sets $\{ x \in \mathbb{R}^n : u^{\star}(x) > t\}$ are concentric balls with the same measure as the level sets  $\{ x \in \Omega : |u(x)| > t\}$. More precisely,  $u^{\star}$  is defined as
$$
u^{\star}(x) = u^*(\alpha_n |x|^n) = \inf\{ t \geq 0 : \mu(t) < \alpha_n |x|^n \}.
$$
Note that $||u||_{\infty}=u^*(0)=u^{\star}(0)$. We refer to \cite{T} for more details on rearrangements of functions.

\begin{proof}[Proof of Proposition \ref{proptech1}]\label{Proof1}

Let us consider first the case where $f\equiv 1$ and if $\Omega$ is a ball centered at $x$ of radius $R$. Straightforward computation shows that
\begin{eqnarray*}
\left|\int_{\mathbb R^n} \chi_\Omega(y) \Phi (x-y) dy \right| &=& \frac{1}{n(n-2)\alpha_n} \int_\Omega |x-y|^{2-n} dy \\
&=& \frac{1}{(n-2)}\int_0^R r^{2-n} r^{n-1} dr \\
&=& \frac{1}{2(n-2)} R^2 = \frac{1}{2(n-2)\alpha_n^{\frac{2}{n}}} \operatorname{Vol}(B_R)^{2/n}.
\end{eqnarray*}
Now, for the general case, notice that
\begin{eqnarray*}
|w(x)| & = & \left |\int_{\mathbb R^n} f(y) \chi_\Omega(y) \Phi (x-y) dy \right| \\ & \leq & \frac{1}{n(n-2)\alpha_n}  \int_{\mathbb R^n} |f(y)| \chi_\Omega(y) |x-y|^{2-n} dy \\
&\leq & \frac{1}{n(n-2)\alpha_n}  \int_{\mathbb R^n} \chi_\Omega(y) |x-y|^{2-n} dy.
\end{eqnarray*}
The following is a  classical result of Hardy and Littlewood that can be found in \cite{HLP} :
$$\int_{\mathbb R^n} u(x) v(x) dx \leq \int_{\mathbb R^n} u^\star (x) v^\star (x) dx.$$
Therefore, since $\Phi = \Phi^{\star}$, we get that
\begin{eqnarray*}
|w(x)| & \leq &  \int_{\mathbb R^n} \chi_\Omega(y) \Phi(x-y) dy \\
& \leq &  \int_{\mathbb R^n} \chi_{\Omega^\star}(y)\Phi^{\star}(x-y) dy \\
& = & \frac{1}{n(n-2)\alpha_n}  \int_{\Omega^\star}  |x-y|^{2-n} dy,
\end{eqnarray*}
where $\Omega^\star$ denotes a ball centered at $x$ of same volume of $\Omega$. By the previous case, one gets the desired result.
\end{proof}

\begin{remark}
The last step of Proof \ref{Proof1} is to show that
 \begin{equation}\label{bt}
 \int_{\Omega} \Phi(x-y)dy \leq \int_{\Omega^\star} \Phi(x-y)dy.
 \end{equation}
A generalization of \eqref{bt} is given by Lemma \ref{rearrangement}.
%The standard way to do so (see for instance \cite[p. 185]{T2}) uses rearrangement techniques and the fact that $\Phi = \Phi^{\star}$ since the function is a decreasing radial function. However, such method is limited to $\mathbb{R}^n$. Notice that \eqref{bt} can be shown by applying the Bathtub principle \cite[Theorem 1.14]{LL} as stated in Lemma \ref{rearrangement}.
\end{remark}

That being done, we can start the main proof of this section.

\begin{proof}[Proof of Lemma \ref{lemme2}]
Renormalize $u_{\lambda}$ such that $||u_{\lambda}||_\infty=1$. Consider $\delta \in (0,1)$. We want to show that there exists a constant $C_{n,\delta}>0$ such that
$$\operatorname{Vol}(V^i_{\delta})\geq C_{n,\delta} \lambda^{-\frac{n}{2}}.$$
Let $g = u - \delta $. We have that $\Delta g = \Delta u_{\lambda,i}  =  \lambda u_{\lambda,i} $ in $V^i_{\delta}$. By Proposition \ref{proptech1}, there exists $w(x)$ satisfying \eqref{poisson} with $f = -\lambda u_{\lambda,i} $ and $\Omega = V^i_{\delta}$ such that $||w||_\infty \leq \frac{1}{2(n-2)\alpha_n^{\frac{2}{n}}} \lambda \operatorname{Vol}(V^i_{\delta})^{\frac{2}{n}}$. Consider the function $g+w$ on $V^i_{\delta}$. On the boundary, we have that $g+w \leq \frac{1}{2(n-2)\alpha_n^{\frac{2}{n}}} \lambda \operatorname{Vol}(V^i_{\delta})^{\frac{2}{n}}$. Consider $x_0$ in $V^i_{\delta}$ such that $u_{\lambda,i}(x_0)=1 = ||u_\lambda||_\infty$. Thus, we have that $(g+w)(x_0) \geq (1 - \delta) - \frac{1}{2(n-2)\alpha_n^{\frac{2}{n}}} \lambda \operatorname{Vol}(V^i_{\delta})^{\frac{2}{n}}$.

Moreover, since $\Delta (g+w) = \lambda u_{\lambda,i}  - \lambda u_{\lambda,i} = 0$, we can use the maximum principle on $g+w$. This implies that
\begin{eqnarray*}  (1 - \delta) - \frac{1}{2(n-2)\alpha_n^{\frac{2}{n}}} \lambda \operatorname{Vol}(V^i_{\delta})^{\frac{2}{n}} & \leq & \frac{1}{2(n-2)\alpha_n^{\frac{2}{n}}} \lambda \operatorname{Vol}(V^i_{\delta})^{\frac{2}{n}} \\ \iff  \operatorname{Vol}(V^i_{\delta})^{\frac{2}{n}} &\geq & \frac{1}{2} (1 - \delta)\left(\frac{\lambda}{2(n-2)\alpha_n^{\frac{2}{n}}}\right)^{-1}, \end{eqnarray*}
yielding that $\operatorname{Vol}(V^i_{\delta}) \geq (1 - \delta)^{\frac{n}{2}} (2(n-2))^{\frac{n}{2}} \alpha_n \lambda^{-\frac{n}{2}} $.
\end{proof}

\subsection{Proof of Theorem \ref{lemme1} }

The proof of Theorem \ref{lemme1} for manifolds keeps the same spirit than the proof for $\mathbb{R}^n$. The main difference with the previous proof is that we can not use the Proposition \ref{proptech1} since it relies on the fundamental solution of the Laplace operator on $\mathbb{R}^n$. We shall instead consider the Green representation of solution to poisson problem on $M$.

Let $\Omega$ be a compact smooth domain of $(M^n,g)$ where $n\geq 3$. It is known that there exists a Green function (see for instance \cite{SY}), namely a smooth function $G$ defined on $\Omega \times \Omega \setminus \{ (x,x) : x \in \Omega \}$ such that
\begin{itemize}
\item $ G(x,y) = G(y,x), x\neq y$;
\item For fixed $y$, $ \Delta_x G(x,y) = 0, \forall x \neq y $;
\item $ G(x,y) \geq 0 $ and $G$ vanishes on the boundary for $\Omega$;
\item As $x\to y$ for fixed $y$, $ G(x,y) \leq  \rho(x,y)^{2-n} (1 + o(1)), n \geq 3$, where $\rho(x,y)$ is the geodesic distance between $x$ and $y$ (see \cite[p. 81]{SY}).
\end{itemize}
Moreover, if we consider the following problem:
\begin{equation} \label{Poisson}
 \Delta_g w = f \mbox{ in } M;
\end{equation}
then, it is known that $w$ is given by
$$ w(y) = \int_M G(x,y) f(x) d\sigma. $$

\begin{proposition}\label{proptech2}
Let $n\geq 3$, $||u_\lambda||_\infty = 1$ and $\delta \in (0,1)$. Let $A_i$ denote a nodal domain of $u_\lambda$ and $V_{\delta}^i = \{ x \in A_i : |u_\lambda(x)| \geq \delta m_{A_i} \}$. There exist $\lambda_0$ and $k_{g,\lambda_0}>0$ such that $\forall \lambda > \lambda_0$ and for any $x_0 \in V^i_\delta$, we have that
$$ | w(x_0) | \leq k_{g,\lambda_0} \lambda \operatorname{Vol}_g(V^i_{\delta})^{\frac{2}{n}}.$$
\end{proposition}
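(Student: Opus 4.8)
The plan is to feed the Green's representation of $w$ into Lemma \ref{rearrangement}. Recall that $w$ solves $\Delta_g w=-\lambda\,\chi_{V_\delta^i}u_{\lambda,i}$ on a compact smooth domain $\Omega\subset M$ containing $V_\delta^i$, so that, in terms of the Dirichlet Green's function $G$ of $\Omega$,
\[
w(x_0)=-\lambda\int_{V_\delta^i}G(x,x_0)\,u_{\lambda,i}(x)\,d\sigma,\qquad x_0\in V_\delta^i .
\]
Since $\|u_\lambda\|_\infty=1$, this already gives $|w(x_0)|\le\lambda\int_{V_\delta^i}G(x,x_0)\,d\sigma$, where $\rho(x,x_0)=d_g(x,x_0)$, and everything reduces to bounding $\int_{V_\delta^i}G(x,x_0)\,d\sigma$ by a constant (depending only on $(M,g)$) times $\operatorname{Vol}_g(V_\delta^i)^{2/n}$.

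First I would fix the geometry so that the Green's function is controlled uniformly. For $\lambda\ge\lambda_0$ one can take $\Omega$ independent of $\lambda$ — for instance $\Omega=M$ when $\partial M\neq\emptyset$, and otherwise $M$ with a fixed geodesic ball disjoint from $V_\delta^i$ removed (by domain monotonicity of Green's functions, $G_\Omega\le G_{\Omega'}$ for $\Omega\subset\Omega'$, so one may always dominate by such a fixed domain). Using the listed properties of $G$ and the compactness of $\overline\Omega$, there are constants $r_0>0$ and $C_g,M_{r_0}>0$, depending only on $(M,g)$, with
\[
G(x,x_0)\le C_g\,\rho(x,x_0)^{2-n}\ \text{ for }\rho(x,x_0)<r_0,\qquad G(x,x_0)\le M_{r_0}\ \text{ for }\rho(x,x_0)\ge r_0 ,
\]
the near-diagonal bound coming from $G\le\rho^{2-n}(1+o(1))$ with the $o(1)$ absorbed once $\rho<r_0$ (uniformity in the base point being exactly where compactness is used). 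Splitting $\int_{V_\delta^i}G(x,x_0)\,d\sigma$ accordingly, the contribution of $\{\rho\ge r_0\}$ is at most $M_{r_0}\operatorname{Vol}_g(V_\delta^i)\le M_{r_0}\operatorname{Vol}_g(M)^{1-\frac2n}\operatorname{Vol}_g(V_\delta^i)^{\frac2n}$, which is of the required form.

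Next I would handle the remaining piece. On $V_\delta^i\cap B_{r_0}(x_0)$ I use $G(x,x_0)\le C_g\rho(x,x_0)^{2-n}$ and invoke Lemma \ref{rearrangement} with center $x_0$ and the non-negative strictly decreasing function $f(\rho)=\rho^{2-n}$ (here $n\ge3$). This replaces the integration set by the geodesic ball $B_R(x_0)$ with $\operatorname{Vol}_g(B_R(x_0))=\operatorname{Vol}_g\!\bigl(V_\delta^i\cap B_{r_0}(x_0)\bigr)$; in particular $R\le r_0$, since the volume of a geodesic ball about $x_0$ is nondecreasing in the radius, so the truncation before rearranging is precisely what keeps the resulting ball small. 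Passing to geodesic polar coordinates on $B_R(x_0)\subset B_{r_0}(x_0)$, the uniform Jacobian comparison $d\sigma=\rho^{n-1}\bigl(1+O(\rho^2)\bigr)\,d\rho\,d\theta$ (valid for $\rho\le r_0<\operatorname{inj}(M)$) yields both $\int_{B_R(x_0)}\rho^{2-n}\,d\sigma\le C_g R^2$ and $\operatorname{Vol}_g(B_R(x_0))\ge c_g R^n$, hence $R^2\le C_g\operatorname{Vol}_g(V_\delta^i)^{2/n}$ after relabeling constants. Adding the two contributions and multiplying by $\lambda$ gives $|w(x_0)|\le k_{g,\lambda_0}\lambda\operatorname{Vol}_g(V_\delta^i)^{2/n}$, with $k_{g,\lambda_0}$ depending only on $(M,g)$ and $n$.

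The polar-coordinate estimates are routine; the delicate point is the uniform control of the Green's function. One must know that the $o(1)$ in $G\le\rho^{2-n}(1+o(1))$ is uniform in the base point (this needs compactness of $M$), and one must ensure that the domain $\Omega$ carrying the Poisson problem — which a priori depends on $\lambda$ through $V_\delta^i$ — can be replaced by, or dominated by, a fixed domain on which $G$ is bounded away from the diagonal by a $\lambda$-independent constant $M_{r_0}$; this is what the threshold $\lambda_0$ is for. In particular one should resist enclosing $V_\delta^i$ in a thin tube around it, since the Green's function of such a tube is far too large away from the diagonal — a fixed domain as above is what makes $M_{r_0}$ a genuine constant, and this, together with the truncation at $r_0$ before applying Lemma \ref{rearrangement}, is what makes the estimate insensitive to the global shape of $V_\delta^i$.
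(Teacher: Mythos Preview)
Your argument is correct and follows the same skeleton as the paper's: Green representation of $w$, the pointwise bound $G(x,x_0)\le C_g\,\rho(x,x_0)^{2-n}$, Lemma~\ref{rearrangement} to replace $V_\delta^i$ by a geodesic ball, and a normal/polar-coordinate computation on that ball. The organization differs in one place. You split the integral into $\{\rho<r_0\}$ and $\{\rho\ge r_0\}$, bounding the far piece crudely by $M_{r_0}\operatorname{Vol}_g(V_\delta^i)\le C\operatorname{Vol}_g(V_\delta^i)^{2/n}$; the paper instead invokes the \emph{global} bound $G(x,x_0)\le C_g\,\rho(x,x_0)^{2-n}$ for all $x\neq x_0$ (citing \cite{R}), applies Lemma~\ref{rearrangement} once to all of $V_\delta^i$, and handles the possibility that the resulting ball is not small by a separate ``large $V_\delta^i$'' case (where $\lambda\operatorname{Vol}_g(V_\delta^i)^{2/n}$ is bounded below). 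Your truncation-before-rearranging is a clean alternative to that case split, and has the virtue of using only the near-diagonal asymptotic listed in the paper rather than the stronger global estimate from \cite{R}.

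One point to tighten: your choice of $\Omega$ in the closed case (``$M$ with a fixed geodesic ball disjoint from $V_\delta^i$ removed'') cannot literally be fixed, since $V_\delta^i$ varies with $\lambda$ and $i$. The cleanest fix is the one the paper uses implicitly: take $w$ with Dirichlet conditions on the nodal domain $A_i$ itself, and observe that the bound $0\le G_{A_i}(x,x_0)\le C_g\,\rho(x,x_0)^{2-n}$ holds with $C_g$ depending only on $(M,g)$ and not on $A_i$ (this is exactly the content of \cite{R}); your $M_{r_0}$ is then simply $C_g r_0^{2-n}$, uniform in $i$ and $\lambda$. With that adjustment the argument is complete.
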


We want to prove an analogous result to Proposition \ref{proptech1}. To do so, we treat split the argument into two cases depending on if the volume of $V^i_\delta$ is "large" or "small". We define "small $V^i_\delta$" in such a way that we can apply normal coordinates. This becomes handy since Green functions on $M$ behaves roughly like the fundamental solution of the Laplace operator on $\mathbb{R}^n$. Using the Lemma \ref{rearrangement}, it is then possible to bound $w$ like claimed.

\begin{proof}[Proof of Proposition \ref{proptech2}]

Let $A_i$ a nodal domain of $u_\lambda$ and let $x_0$ be any point such that $u_\lambda(x_0) = m_{A_i}$.

Let $B_{x_0}(r):= \operatorname{exp}_{x_0} (B_0(r))$ denote the geodesic ball of radius $r$ centered at $x$. It is known that for $r$ small enough, we have that
$$\operatorname{Vol}_g (B_{x_0}(r)) = r^n \operatorname{Vol}(B_0(1)) \left( 1 - \frac{ \operatorname{scal}_g(x_0)) }{ 6 ( n + 2) } r^2 + o(r^2) \right), $$
where $\operatorname{scal}_g(x_0)$ denotes the scalar curvature at $x_0$. Therefore, there exists $\epsilon\in(0,1)$ such that for all $0 < r \leq \epsilon \leq \operatorname{injrad} (M,g) $, there exist $A_{g}>0$ and $B_{g}>0$ such that
\begin{equation}\label{boundsvol}
A_{g} r^n \leq \operatorname{Vol}_g (B_{x_0}(r)) \leq B_{g} r^n. \end{equation}

Renormalize $u_{\lambda}$ such that $||u_{\lambda}||_\infty=1$. Fix a nodal domain $A_i$ and $x_0 \in A_i$.

Let $\lambda_0 = B_g^{-2/n} \epsilon^{-2}$. Notice that if $\lambda \geq \lambda_0$ and if $\operatorname{Vol}_g(V^i_\delta) > \operatorname{Vol}_g(B_{x_0}(\epsilon))$, the result holds with $k_g = \frac{A_g}{B_g}$.

On the other hand, if $\lambda \geq \lambda_0$, but  $\operatorname{Vol}_g(V^i_\delta) \leq \operatorname{Vol}_g(B_{x_0}(\epsilon))$, it is always possible to pick $R$ such that $\operatorname{Vol}_g (V_{\delta}^i) = \operatorname{Vol}_g (B_R(x_0))$ and $R \leq \epsilon$ hold.

Let us now consider the auxiliary problem defined by \eqref{Poisson} with $f = -\chi_{V^i_\delta} \lambda u_{\lambda,i}$. By definition of a Green function, we have that
\begin{eqnarray*}\label{estim2}
|w(x_0)| & = & \left| \lambda \int_{V^i_{\delta}} G(x,x_0) u_\lambda(x) d\sigma \right| \\
& \leq & \lambda \int_{V^i_{\delta}}  G(x,x_0)  d\sigma .
\end{eqnarray*}
Using upper bounds on the Green function (see bounds proved in \cite{R}), we have that there exists $C_g > 0$ such that
$$G(x,x_0) \leq C_g \rho(x,x_0)^{2-n}, \ \forall x \neq x_0, $$
implying that
$$ |w(x_0)| \leq C_g \lambda  \int_{V^j_{\delta}} \rho(x,x_0)^{2-n} d\sigma .$$

As it was done in $\mathbb{R}^n$, we need to integrate on a ball to obtain a straightforward computable integral. To do so, we use Lemma \ref{rearrangement} whose proof can be found in Section \ref{Srearrang}. Applying Lemma \ref{rearrangement}, we get the following:
 $$C_g \lambda  \int_{V^j_{\delta}} \rho(x,x_0)^{2-n} d\sigma   \leq  C_g \lambda  \int_{(V^i_\delta)^*} \rho^{2-n} d\sigma ,$$
 where $(V^i_\delta)^* = B_{x_0}(R)=\operatorname{exp}_{x_0} (B_{0}(R))$.

Using Gauss's Lemma, we now have that
\begin{eqnarray*}\label{estim4}
|w(x_0)| & \leq & C_g \lambda \int_{(V^i_\delta)^*} \rho^{2-n} (1 - \frac{1}{6} R_{kl}x^k x^l + O(|x|^3)) dx^1 dx^2 \ldots dx^n  \\
& \leq & C_g \lambda \left(\dfrac{n \omega_n}{2} R^2 - \dfrac{n \omega_n Scal_g(x_0)}{6} \dfrac{R^4}{4} + O(R^5)\right) \\
& \leq & C_g \lambda \dfrac{n \omega_n}{2} R^2 \left( 1 - \dfrac{ Scal_g(x_0)}{6} \dfrac{R^2}{2} + O(R^3)\right) \\
& \leq & C_{g} B_g E_g \lambda \operatorname{Vol}_g(B_{x_0}(R))^{\frac{2}{n}} = k_{g,\lambda_0} \lambda \operatorname{Vol}_g(V_{\delta}^i)^{\frac{2}{n}}.
\end{eqnarray*}

%\begin{eqnarray*}\label{estim3}
%|w(x_0)| & \leq & C_g \lambda  \int_{(V^i_\delta)^*} \rho^{2-n} dV_g \\
%& \leq & C_g \lambda \int_0^{R} \rho^{2-n} \rho^{n-1} (1 + O(\rho^2)) d\rho  \\
%& \leq & C_g \lambda (E_g R^2 + O(R^4)) \\
%& \leq & k_{g} \lambda \operatorname{Vol}_g(B_{x_0}(R))^{\frac{2}{n}} = k_{g} \lambda \operatorname{Vol}_g(V_{\delta}^i)^{\frac{2}{n}}.
%\end{eqnarray*}

%Same computation with more details:

%last line follows from Lemma \ref{rearrang}.

\end{proof}

The last step to prove Theorem \ref{lemme1} is very similar to the last step in the proof of Lemma \ref{lemme2}.

\begin{proof}[Proof of Theorem \ref{lemme1}]
Renormalize $u_{\lambda}$ such that $||u_{\lambda}||_\infty=1$. Let $g = u - \delta + w$. On the boundary of $V^i_{\delta}$, we have that $g= \delta - \delta = 0$. Consider any $x_0$ in $V^i_{\delta}$ such that $u_{\lambda,i}(x_0)=1$. By Proposition \ref{proptech2}, we have that $g(x_0) \geq (1 - \delta) - C_{g,\lambda_0} \lambda \operatorname{Vol}(V^i_{\delta})^{\frac{2}{n}}$.

Moreover, since $\Delta g = \Delta u_{\lambda,i}  + \Delta w =  \lambda u_{\lambda,i} - \lambda u_{\lambda,i} = 0 $ in $V^i_{\delta}$, we can use the maximum principle on $g$. This implies that
$$  (1 - \delta)  - C_{g,\lambda_0} \lambda \operatorname{Vol}_g(V^i_{\delta})^{\frac{2}{n}} \leq 0 \iff  \operatorname{Vol}_g(V^i_{\delta}) \geq  k_{g,\lambda_0}(1 - \delta)^{\frac{n}{2}} \lambda^{-\frac{n}{2}}. $$
\end{proof}

\begin{remark}
Note that for compact surfaces, Theorem \ref{lemme1} is implied by a bound on the inner radius of nodal domains, namely \cite[Lemma 10]{M}. Indeed, the latter lemma implies that there exists a ball of radius $\frac{\epsilon}{\sqrt{\lambda}}$ that is centered at a "nodal extrema", thus implying the result.
\end{remark}

\subsection{Proof of Theorem \ref{Th2} }

Let $\delta \in (0,1)$ and $\lambda$ be large enough. Recall that $\mathcal{A}(u)= \{ A_i \}_{i=1}^{|\mathcal{A}(u_\lambda)|}$ is the collection of the nodal domains of $u_\lambda$. Consider
\begin{equation}
u_\lambda = \sum^{|\mathcal{A}(u_\lambda)|}_{i=1} u_{\lambda,i}  \quad \text{where $u_{\lambda,i} = \left\{
  \begin{array}{l l}
     u_\lambda & \quad \text{if $x\in\mathcal{A}_i,$}\\
     0 & \quad \text{elsewhere.}\\
   \end{array} \right.$}
\end{equation}
Observe that $\lambda=\lambda_1(A_i)$ since $u_{\lambda,i}$ does not vanish in $A_i$ (see \cite{Cha} or \cite{H}). Apply Theorem \ref{lemme1} in order to get the following:

\begin{align*}
\int_{A_i} |u_{\lambda,i}|^p & d\sigma \geq \int_{V^i_\delta} \dfrac{m_{A_i}}{2}^p d\sigma  \\
& =  \dfrac{m_{A_i}}{2}^p  \operatorname{Vol}_g(V^i_\delta) \\
& \geq k_{g,\delta,\lambda_0} m_{A_i}^p \lambda^{-\frac{n}{2}}.
\end{align*}
If we sum over all nodal domains, we get that
\begin{equation}\label{alex}
\int_M |u_\lambda|^p d\sigma = \sum_{i=1}^{|\mathcal{A}(u_\lambda)|} \int_{A_i} |u_{\lambda,i}|^p d\sigma \geq k_{g,\delta,\lambda_0} \lambda^{-\frac{n}{2}} \sum_{i=1}^{|\mathcal{A}(u_\lambda)|} m_{A_i}^p.
\end{equation}

To obtain \eqref{Th2eq2}, simply use Sogge's $L^p$ bounds $||u_\lambda||_p \leq \lambda^{\delta(p)} ||u_\lambda||_2$ in \eqref{alex}.

Notice that one can read off \eqref{Th2eq1} using the latter argument. Indeed, since
\begin{equation}\nonumber
 \int_M |u_\lambda| d\sigma  \leq  \operatorname{Vol}_g(M)^{\frac{1}{2}} \bigg( \int_M |u_\lambda|^2 d\sigma  \bigg)^{\frac{1}{2}} = \operatorname{Vol}_g(M)^{\frac{1}{2}},
\end{equation}
if we take $p=1$ in \eqref{alex}, we get
\begin{equation}\nonumber
\operatorname{Vol}_g(M)^{\frac{1}{2}} \geq \int_M |u_\lambda| d\sigma  \geq k_{g,\delta,\lambda_0} \lambda^{-\frac{n}{2}} \sum_{i=1}^{|\mathcal{A}(u_\lambda)|} m_{A_i},
\end{equation}
yielding \eqref{Th2eq1}.

\subsection{Proof of Lemma \ref{rearrangement}}\label{Srearrang}

The following is a more general statement of Lemma \ref{rearrangement}, which we prove thereafter.
\begin{lemma}
Let $(X,d, \Sigma, \mu)$ denote a sigma-finite metric measure space and fix $x_0\in X$. Let $r(x) = d(x_0,x)$. Let $f(r)$ denote a non-negative strictly decreasing function. Let $C$ be a fixed positive constant, then
$$ \sup_{\Omega \subset X, \ \mu(\Omega) = C} \ \ \int_\Omega f(r) d\mu = \int_{\Omega^*} f(r) d\mu,$$
where $\Omega^*$ is the ball centered at $x_0$ of radius $R$, where $R$ is such that $|\Omega| = |\Omega^*|$.
\end{lemma}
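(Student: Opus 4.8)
The plan is to reduce the supremum over all measurable sets of fixed measure to a statement about one-dimensional rearrangement applied to the function $f\circ r$, exploiting the fact that $f$ is strictly decreasing so that its superlevel sets are exactly the open balls $B(x_0,t)$. First I would fix $C>0$ and let $\Omega^{*}=B(x_0,R)$ be the (closed or open, as needed) ball with $\mu(\Omega^{*})=C$; such an $R$ exists because $t\mapsto \mu(B(x_0,t))$ is monotone and, in a $\sigma$-finite metric measure space, we may assume it takes the value $C$ (otherwise one replaces the ball by a ball plus part of a sphere of the appropriate measure, or simply notes the statement is about the supremum and the competitor realizing it). The key observation is the pointwise/set-theoretic identity: because $f$ is strictly decreasing, for every level $s$ in the range of $f$ we have $\{x: f(r(x))>s\}=\{x: r(x)<f^{-1}(s)\}=B(x_0,f^{-1}(s))$, a ball centered at $x_0$; in particular the superlevel sets of $h:=f\circ r$ are nested balls about $x_0$.

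The main step is then the following comparison, which I would phrase via the layer-cake (Cavalieri) formula. For any measurable $\Omega$ with $\mu(\Omega)=C$,
\begin{equation}\nonumber
\int_{\Omega} h\,d\mu=\int_{0}^{\infty}\mu\bigl(\{x\in\Omega: h(x)>s\}\bigr)\,ds=\int_{0}^{\infty}\mu\bigl(\Omega\cap B(x_0,f^{-1}(s))\bigr)\,ds,
\end{equation}
and similarly for $\Omega^{*}$ one gets $\int_{\Omega^{*}}h\,d\mu=\int_{0}^{\infty}\mu\bigl(B(x_0,R)\cap B(x_0,f^{-1}(s))\bigr)\,ds=\int_{0}^{\infty}\mu\bigl(B(x_0,\min(R,f^{-1}(s)))\bigr)\,ds$. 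So it suffices to show, for each fixed $s$, that $\mu(\Omega\cap B(x_0,f^{-1}(s)))\le \mu(B(x_0,\min(R,f^{-1}(s))))$. This is elementary: writing $\rho=f^{-1}(s)$, if $\rho\le R$ then $\Omega\cap B(x_0,\rho)\subseteq B(x_0,\rho)$ gives $\mu(\Omega\cap B(x_0,\rho))\le\mu(B(x_0,\rho))=\mu(B(x_0,\min(R,\rho)))$; if $\rho> R$ then $\mu(\Omega\cap B(x_0,\rho))\le\mu(\Omega)=C=\mu(B(x_0,R))=\mu(B(x_0,\min(R,\rho)))$. Integrating this inequality in $s$ gives $\int_{\Omega}h\,d\mu\le\int_{\Omega^{*}}h\,d\mu$, and since $\Omega^{*}$ itself is an admissible competitor the supremum is attained there, which is exactly the claim.

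I would then remark that Lemma~\ref{rearrangement} follows by taking $X=M$ (a manifold is a $\sigma$-finite metric measure space with the Riemannian distance and volume), $x_0$ the given point, $r(x)=d_g(x_0,x)$, $\mu=\sigma$, and $\Omega^{*}$ the geodesic ball; the hypothesis that $f$ be strictly decreasing is what makes the superlevel-set-equals-ball step exact. The only genuinely delicate point — and the step I expect to be the main obstacle — is the measurability/existence bookkeeping around $R$: one must know that $t\mapsto\mu(B(x_0,t))$ attains the value $C$, or otherwise handle an ``atom'' of the distribution function (a sphere of positive measure). In the Riemannian setting spheres have zero volume so this is a non-issue and $t\mapsto\operatorname{Vol}_g(B_{x_0}(t))$ is continuous and increasing until it saturates at $\operatorname{Vol}_g(M)$; in the general metric measure statement one either restricts to $C\le\sup_t\mu(B(x_0,t))$ or inserts the standard remark that the extremal competitor is a ball together with a measurable subset of a single sphere. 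I would also note that since we only need an upper bound for the supremum, no regularity of $\Omega$ beyond measurability is required, and strict monotonicity of $f$ can be relaxed to monotonicity at the cost of replacing equality by ``$\le$'' in the identification of superlevel sets — but the statement as given is cleanest with strict monotonicity.
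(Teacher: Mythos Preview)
Your proof is correct. The paper takes a different, shorter route: it simply invokes the Bathtub principle (Lieb--Loss, Theorem~1.14) as a black box, noting that the maximizer of $\int_X fg\,d\mu$ over $\{0\le g\le 1:\int g\,d\mu=C\}$ is $g=\chi_{\{f\ge s\}}$ for the appropriate level $s$, and then observes that since $f$ is strictly decreasing in $r$ this superlevel set is a ball centered at $x_0$. Your argument instead unpacks this directly via the layer-cake formula and a level-by-level comparison $\mu(\Omega\cap B(x_0,\rho))\le\mu(B(x_0,\min(R,\rho)))$, which is essentially a self-contained proof of the relevant special case of the Bathtub principle. The paper's approach is quicker and situates the lemma within a known framework; yours is more elementary, avoids the external reference, and makes the mechanism (nested balls as superlevel sets) completely explicit. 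Your discussion of the existence of $R$ and the possible atom on a sphere is also more careful than the paper's, which tacitly assumes the appropriate level $s$ exists.
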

This result is an application of the Bathtub principle. The following is a reformulation of \cite[Theorem 1.14]{LL} adapted to our setting:
\begin{theorem}
Let $f$ be a real-valued measurable strictly decreasing function on a sigma finite measure space $(X, \Sigma, \mu)$. Fix $G = \mu(\Omega)$ and consider the class of functions $$\mathcal{C} = \left\{ 0 \leq g \leq 1 : \int_X g d\mu= G\right\}.$$
Let $E\subset X$, then
$$I=\sup_{g\in\mathcal{C}} \int_E fg d\mu$$
is obtained by taking $g = \chi_{\{f \geq s\}},$ where $s$ is such that $\mu(\{x:f(x) \geq s\}) = G$.
\end{theorem}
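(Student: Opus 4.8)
The plan is to prove the statement as a form of the bathtub principle; it can be deduced from \cite[Theorem 1.14]{LL} applied to $-f$, but I will sketch the short self-contained argument, which at the same time yields Lemma \ref{rearrangement}. First I would fix the cutting level: since $f$ is decreasing and $\mu$ is $\sigma$-finite, the superlevel sets $\{f\geq t\}$ are nested and of finite measure once $t$ stays away from $\inf f$, so one can choose $s$ with $\mu(\{f\geq s\})=G$, where $G=\mu(\Omega)$; if $t\mapsto\mu(\{f\geq t\})$ jumps at $s$ one instead takes $g^{*}=\chi_{\{f>s\}}+c\,\chi_{\{f=s\}}$ with $c\in[0,1]$ chosen so that $\int_X g^{*}\,d\mu=G$. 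This is the only place where atoms of $\mu$ must be handled. In the geometric setting of Lemma \ref{rearrangement}, $\{f\geq s\}$ is exactly the metric ball $\Omega^{*}=\{x:r(x)\leq R\}$ of the right radius $R$, since $f$ is a strictly decreasing function of $r(x)=d(x_0,x)$.

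The one genuine step is the pointwise inequality
\[
(f(x)-s)\bigl(g^{*}(x)-g(x)\bigr)\geq 0\qquad\text{for }\mu\text{-a.e. }x,
\]
which holds for every competitor $g$ with $0\leq g\leq 1$ and $\int_X g\,d\mu=G$: on $\{f\geq s\}$ one has $g^{*}=1\geq g$ so both factors are $\geq 0$; on $\{f<s\}$ one has $g^{*}=0\leq g$ so both factors are $\leq 0$; and on $\{f=s\}$ the first factor vanishes. Integrating, and using that $g$ and $g^{*}$ have the same total mass $G$, gives
\begin{align*}
\int_X f g^{*}\,d\mu-\int_X f g\,d\mu
&=\int_X (f-s)(g^{*}-g)\,d\mu+s\int_X(g^{*}-g)\,d\mu\\
&=\int_X (f-s)(g^{*}-g)\,d\mu\;\geq\;0 .
\end{align*}
So $g^{*}$ maximizes $g\mapsto\int_X f g\,d\mu$ over $\mathcal{C}$. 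Since the supremum over sets $\Omega$ with $\mu(\Omega)=C$ coincides with the supremum over the indicator functions lying in $\mathcal{C}$, and (in the atomless case) the maximizer is itself such an indicator, the two agree and their common value is $\int_{\Omega^{*}}f\,d\mu$; specializing to $\Omega$ an arbitrary measurable set of measure $C$ in a metric measure space recovers Lemma \ref{rearrangement}. The extra set $E$ in the theorem is harmless: one takes $E=X$, equivalently one assumes $g$ and $g^{*}$ supported in $E$, so that $\int_E(g^{*}-g)\,d\mu=0$ and the computation is unchanged.

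I expect no serious obstacle; the points needing care, none of them the actual crux, are: (i) the existence of the level $s$, which requires $\mu$-finiteness of the relevant superlevel sets and, for the clean form $g^{*}=\chi_{\{f\geq s\}}$ with $\mu(\{f\geq s\})=G$ exactly, no atom of $\mu$ at that level (otherwise the fractional term $c\,\chi_{\{f=s\}}$ stays); (ii) well-definedness of the integrals, which is fine since $f\geq 0$ is measurable and $\mu$ is $\sigma$-finite, and in the application $f(r)=\rho^{\,2-n}$, $n\geq 3$, is integrable over balls; and (iii) the bookkeeping with $E$. The real content is just the one-line pointwise inequality above, i.e.\ the usual bathtub/rearrangement device.
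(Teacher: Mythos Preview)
Your proof is correct: the pointwise inequality $(f-s)(g^{*}-g)\geq 0$ together with the mass constraint is exactly the standard bathtub argument, and your handling of the level $s$ and potential atoms is careful. The paper does not actually prove this theorem; it simply quotes it as a reformulation of \cite[Theorem 1.14]{LL} and then observes that, because $f$ is strictly decreasing in $r$, the superlevel set $\{f\geq s\}$ is a metric ball, which yields Lemma \ref{rearrangement}. So you have supplied a self-contained proof where the paper only cites; the approaches are the same in substance. One minor remark: your treatment of the extraneous set $E$ is fine for the application (where effectively $E=X$), but as literally stated in the paper the role of $E$ is unclear, and you are right not to take it too seriously.
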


Notice that the Bathtub principle tells us that the supremum is given by $g = \chi_{\{f \geq s\}},$ which is the characteristic function of a ball since $f$ is a strictly decreasing function of $r$, yielding the desired result.

\section{Proof of Theorems \ref{Th1}, \ref{Neumann}, and \ref{Lindq}}\label{S4}

\subsection{Proof of Theorem \ref{Th1}}

We present the background required to obtain Theorem \ref{Th1}. For any fixed positive $\lambda$, we consider the $n-$ball,
\begin{equation}\label{ball}
B_\lambda^n = \{ x\in\mathbb{R}^n : |x| \leq j_{\frac{n}{2}-1} \lambda^{-\frac{1}{2}} \},
\end{equation}
where $ j_{\frac{n}{2}-1}$ is the first positive zero of the Bessel function $J_{\frac{n}{2}-1}$.
It is easy to see that the following problem,
\begin{equation*}
 \left\{
  \begin{array}{l l}
    \Delta z= \mu z \mbox{ in } B_\lambda^n,  \\
     z=0 \mbox{ on } \partial B_\lambda^n, \\
   \end{array} \right.
\end{equation*}
has its first eigenvalue equal to $\lambda$, and that the corresponding eigenfunction is given by
\begin{equation}\label{defZ}
z(x) = |x|^{1-\frac{n}{2}} J_{\frac{n}{2}-1}(\lambda^{\frac{1}{2}}|x|).
\end{equation}

We use the following result, due to G. Chiti (see \cite{Chi1,Chi2}), in the proof:

\begin{proposition}[{\cite[Theorem 2]{Chi1}}]\label{ThChiti}
Let $u$ be a function satisfying \eqref{prob} and consider $z(x)$, the eigenfunction to the Dirichlet eigenvalue problem on $B^n_\lambda$ defined above. Then, for any $p\geq 1$,
\begin{equation}\label{Chiti1}
||u||_{\infty}\Bigg(\int_\Omega |u|^p\Bigg)^{-\frac{1}{p}} \leq ||z||_{\infty} \Bigg(\int_\Omega z^p\Bigg)^{-\frac{1}{p}},
\end{equation}
with equality if and only if $\Omega$ is a ball, $c=0$, $a_{ij}=\delta_{ij}$, $\lambda$ is equal to the first eigenvalue of the equality in \eqref{prob} and $ |\Omega| = |B^n_\lambda|$, where $|E|$ denotes the Lebesgue measure of the set $E$.
\end{proposition}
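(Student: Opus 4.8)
Since \eqref{Chiti1} is Chiti's reverse Hölder inequality, quoted here from \cite{Chi1}, I will only indicate how its proof goes; the plan is to use the classical symmetrization method of Talenti and Chiti (\cite{Chi1,Chi2}). First I would reduce to the case actually needed below: $u$ is the first Dirichlet eigenfunction of $L$ on $\Omega$, so $\lambda=\lambda_1(\Omega,L)$, and we may take $u>0$ (in the applications $u$ is the restriction of an eigenfunction to one of its nodal domains, where it genuinely is the ground state, so there is no loss). Then I would record, via Faber--Krahn, that $\lambda=\lambda_1(\Omega)\ge\lambda_1(B_{|\Omega|})$, where $B_{|\Omega|}$ is the ball of volume $|\Omega|$; since the first Dirichlet eigenvalue of a ball is strictly decreasing in its radius, this forces $|B^n_\lambda|\le|\Omega|$, so that the decreasing rearrangement $z^{*}$ of $z$ is defined on a subinterval $[0,|B^n_\lambda|]$ of the interval $[0,|\Omega|]$ carrying the decreasing rearrangement $u^{*}$ of $u$.

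The first step is the symmetrization. Writing $Lu=-\operatorname{div}(A\nabla u)+cu$ with $A=(a_{ij})$ and $\mu(t)=|\{u>t\}|$, I would integrate $Lu=\lambda u$ over the superlevel set $\{u>t\}$, apply the divergence theorem on $\{u=t\}$ (unit normal $-\nabla u/|\nabla u|$), and then combine the Cauchy--Schwarz inequality on $\{u=t\}$ (applied to $(A\nabla u\cdot\nabla u)^{1/2}|\nabla u|^{-1/2}$ and its reciprocal), the ellipticity hypothesis \eqref{ellipticity} in the form $A\nabla u\cdot\nabla u\ge|\nabla u|^{2}$, the sign condition $c\ge0$ (to pass from $\int_{\{u>t\}}(\lambda-c)u$ to $\lambda\int_{\{u>t\}}u$), the coarea formula $-\mu'(t)=\int_{\{u=t\}}|\nabla u|^{-1}$, and the Euclidean isoperimetric inequality bounding the $(n-1)$-measure of $\{u=t\}$ below by $n\alpha_n^{1/n}\mu(t)^{1-1/n}$. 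After passing to the rearrangement variable $s=\mu(t)$ this produces
\[
-\,(u^{*})'(s)\ \le\ \frac{\lambda}{n^{2}\alpha_n^{2/n}\,s^{2-2/n}}\int_0^{s}u^{*}(\sigma)\,d\sigma,\qquad 0<s<|\Omega|,
\]
with the standard caveat that when the $a_{ij}$ are only measurable one first regularizes. For the radial ground state $z$ on $B^n_\lambda$ every one of the inequalities used above is an equality ($A=I$, $c=0$, the level sets are concentric balls), so $z^{*}$ satisfies the same relation with equality on $(0,|B^n_\lambda|)$, together with $z^{*}(|B^n_\lambda|)=0$.

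The second step is the comparison. Since $u$ and $z$ are each defined only up to a scalar, I would normalize so that $\int_\Omega|u|^{p}=\int_{B^n_\lambda}z^{p}$ for the exponent $p$ in the statement, and extend $z^{*}$ by $0$ to $[0,|\Omega|]$. Chiti's comparison lemma then says that $u^{*}$ and $z^{*}$ can meet at most once on $[0,|B^n_\lambda|]$; combined with $|B^n_\lambda|\le|\Omega|$, the normalization, and the facts that $u^{*}>0$ on $[0,|\Omega|)$ while $z^{*}(|B^n_\lambda|)=0$, this forces $u^{*}\le z^{*}$ near $0$, and in particular $\|u\|_\infty=u^{*}(0)\le z^{*}(0)=\|z\|_\infty$, which is exactly \eqref{Chiti1}. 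The equality clause is obtained by tracking the equality cases of the first step — equality in the isoperimetric inequality, so the level sets are balls; equality in \eqref{ellipticity}, so $a_{ij}=\delta_{ij}$ where $\nabla u\ne0$; and $c\equiv0$ — together with $|\Omega|=|B^n_\lambda|$, which by Faber--Krahn means $\lambda=\lambda_1(\Omega)$.

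I expect the only substantial obstacle to be Chiti's comparison lemma of the second step. In terms of the antiderivatives $U(s)=\int_0^{s}u^{*}$ and $Z(s)=\int_0^{s}z^{*}$, the relations above read $-U''-q\,U\le 0=-Z''-q\,Z$ with the weight $q(s)=\lambda\,n^{-2}\alpha_n^{-2/n}\,s^{2/n-2}$, which is singular at $s=0$, and the content of the lemma is that two positive decreasing profiles obeying this ordinary differential (in)equality, normalized to equal mass but living on different intervals, cannot cross more than once. The hard part will be the Sturm-type argument making this precise — in particular controlling the behaviour near the singular endpoint $s=0$ and at the free endpoint $s=|B^n_\lambda|$ where $Z$ terminates; by comparison, the symmetrization of the first step and the passage from the single-crossing property to \eqref{Chiti1} are routine.
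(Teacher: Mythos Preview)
The paper does not prove this proposition at all: it is quoted verbatim as \cite[Theorem 2]{Chi1} and used as a black box, the only subsequent work being the explicit computation of the right-hand side leading to \eqref{INEQ1}. Your outline of the Talenti--Chiti symmetrization argument is a faithful sketch of the original proof in \cite{Chi1,Chi2}, but there is nothing in the present paper to compare it against.
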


Remark that we can compute the right hand side of \eqref{Chiti1} to obtain the following isoperimetric inequality,
\begin{equation}\label{INEQ1}
||u||_{\infty} \leq  K_{n,p}\lambda^{\frac{n}{2p}} ||u||_{p},
\end{equation}
where $K_{n,p}$ is the constant defined in \eqref{Const}. Indeed, start by computing $||z||_{\infty}$. The fact that $r^{\frac{n}{2}-1}J_{\frac{n}{2}-1}(r)$ attains its maximum at $r=0$ follows from Poisson's integral (see \cite[Section 3.3]{W}). Thus, we have that
\begin{align}\label{equa1}
z(0) &= \lim_{|x|\to 0} \frac{ J_{\frac{n}{2}-1}(\lambda^{\frac{1}{2}}|x|)}{|x|^{\frac{n}{2}-1}} = \frac{\lambda^{\frac{n}{4}-\frac{1}{2}}}{2^{\frac{n}{2}-1}\Gamma(\frac{n}{2})}.
\end{align}
Since $z(x)$ is a radial function, we get that
\begin{equation} \label{equa2}
\Big(\int_\Omega z^p\Big)^{-\frac{1}{p}}  = (n C_n)^{\frac{-1}{p}} \lambda^{\frac{1}{2}-\frac{n}{4}+\frac{n}{2p}}  \Big( \int_0^{ j_{\frac{n}{2}-1}}  r^{p- \frac{np}{2}+n-1} J_{\frac{n}{2}-1}^p(r) dr \Big) ^{\frac{-1}{p}}.
\end{equation}
Combine \eqref{equa1} and \eqref{equa2}, and plug them into \eqref{Chiti1} to get \eqref{INEQ1}.

\begin{proof}[Proof of Theorem \ref{Th1}]

We start by obtaining \eqref{P2}. Let us decompose $u_\lambda$ the following way,
\begin{equation}
u_\lambda = \sum^{|\mathcal{A}(u_\lambda)|}_{i=1} u_i  \quad \text{where $u_i = \left\{
  \begin{array}{l l}
     u_\lambda & \quad \text{if $x\in\mathcal{A}_i$}\\
     0 & \quad \text{elsewhere.}\\
   \end{array} \right.$}
\end{equation}

 Since $\operatorname{supp}(u_i)\cap \operatorname{supp}(u_j) = \varnothing $ for $ i \neq j$, we note that
\begin{equation} \label{Cnorm}
1 = ||u_\lambda||^2_{L^2(M)}
			 = \int_\Omega  \sum^{|\mathcal{A}(u_\lambda)|}_{i=1} u_i^2 \\
			 = \sum^{|\mathcal{A}(u_\lambda)|}_{i=1} \int_{A_i} u_i^2
			= \sum^{|\mathcal{A}(u_\lambda)|}_{i=1} ||u_i||^2_{L^2(A_i)}.
\end{equation}

 Recall that each $u_i$ corresponds to an eigenfunction of the Dirichlet problem on these nodal domains. Indeed, since $u_i$ does not vanish in $A_i$, it corresponds to the first eigenfunction on $\mathcal{A}_i$ and $\lambda_1(\mathcal{A}_i) = \lambda$ by a corollary of Courant's theorem (see \cite{H}).

%Recall that the Rayleigh quotient of $\lambda_1(\Omega)$ for an elliptic operator is given by
%\begin{equation}
%\lambda_1(\Omega)= \inf_{v \in H^1_0(\Omega), v \neq 0} \frac{  \sum_{j,k=1}^n \int_\Omega a_{kj}(x) \frac{\partial v}{\partial x_j} \frac{\partial v}{\partial x_k} dx + \int_\Omega c(x) v(x)^2   dx}{\int_\Omega v(x)^2 dx}.
%\end{equation}
%\noindent (Refer to \cite{H} for more information). We can suppose without loss of generality that $u_i>0$ on $\mathcal{A}_i$. Moreover, since $u_i \in  H^1_0(\mathcal{A}_i)$, we have that
%\begin{align*}
%\lambda_1(\mathcal{A}_i) & \leq \frac{  \sum_{j,k=1}^n \int_{\mathcal{A}_i} a_{kj}(x) \frac{\partial u_i}{\partial x_j} \frac{\partial u_i}{\partial x_k} dx + \int_{\mathcal{A}_i} c(x) u_i(x)^2   dx}{\int_{\mathcal{A}_i} u_i(x)^2 dx} \\
%& = \dfrac{ \int_{\mathcal{A}_i} L(u_i)u_i dx}{\int_{\mathcal{A}_i} u_i(x)^2 dx} \\
%& \leq \lambda  \dfrac{ \int_{\mathcal{A}_i} u_i(x)^2 dx}{\int_{\mathcal{A}_i} u_i(x)^2 dx} \\
%& = \lambda.
%\end{align*}
Thus, we can apply \eqref{INEQ1} with $p=2$ to each $u_i$ so that for all $1 \leq i \leq |\mathcal{A}(u_\lambda)|$, we obtain that
\begin{equation*}
||u_i||_{L^\infty(\mathcal{A}_i)} \leq K_{n,2} \lambda^{\frac{n}{4}} ||u_i||_{L^2(\mathcal{A}_i)}.
\end{equation*}
Therefore, we get that
\begin{equation*}
m_{A_i} =  \sup_{x\in\mathcal{A}_i} |u_i(x)| \leq K_{n,2}\lambda^{\frac{n}{4}}||u_i||_{L^2(\mathcal{A}_i)}.
\end{equation*}
Squaring each side and summing over all nodal domains yield that
\begin{equation*}
\sum^{|\mathcal{A}(u_\lambda)|}_{i=1} m_{\mathcal{A}_i}^2 \leq K_{n,2}^2 \lambda^{\frac{n}{2}} \sum^{|\mathcal{A}(u_\lambda)|}_{i=1}||u_i||^2_{L^2(\mathcal{A}_i)},
\end{equation*}
and we obtain \eqref{P2} by applying \eqref{Cnorm} to the latter equation. In order to get \eqref{P1}, we use \eqref{INEQ1} with $p=1$, to get
\begin{equation*}
||u_i||_{L^{ \infty}(\mathcal{A}_i)} \leq K_{n,1} \lambda^{\frac{n}{2}} ||u_i||_{L^1(\mathcal{A}_i)}.
\end{equation*}
If we sum over all nodal domains and keep in mind that $\operatorname{supp}(u_i)\cap \operatorname{supp}(u_j) = \varnothing $ for $ i \neq j$ , we then get
\begin{align*}
\sum^{|\mathcal{A}(u_\lambda)|}_{i=1} m_{\mathcal{A}_i} & \leq K_{n,1}\lambda^{\frac{n}{2}} \sum^{|\mathcal{A}(u_\lambda)|}_{i=1}||u_i||_{L^1(\mathcal{A}_i)} \\
&  = K_{n,1}\lambda^{\frac{n}{2}} ||u_{\lambda}||_{L^1(\Omega)} \\
& \leq  K_{n,1}\lambda^{\frac{n}{2}} ||u_{\lambda}||_{L^2(\Omega)}  \operatorname{Vol}(\Omega)^{\frac{1}{2}}.
\end{align*}
The last line follows from Cauchy-Schwartz inequality. Since $||u_{\lambda}||_{L^2(\Omega)}=1$, the proof is completed.
\end{proof}

\subsection{Proof of Theorem \ref{Neumann}}

Let $I_1$ denote the family of indexes of nodal domains touching the boundary of $\Omega$ and let $I_2 = |\mathcal{A}(u_\mu)| \setminus I_1$. Let us start by obtaining \eqref{eqN2}

Notice that nodal domains whose index is in $I_2$ are such that the eigenfunction $u$ restricted to them corresponds to the first eigenfunction of the Dirichlet eigenvalue problem on such $A_i$, so that $\mu = \lambda_1(A_i)$. Therefore, it is possible to use \eqref{INEQ1} with $p=2$ as done in the proof of Theorem \ref{Th1} in order to get that
$$ \sum_{i \in I_2} m_{A_i}^2  \leq C \mu.$$
As for nodal domains whose index is in $I_1$, since by the Hormander-Levitan-Avakumovic $L^\infty$ bound, we have that $m_{A_i} \leq C \mu^{1/4}$, we get that
$$ \sum_{i \in I_1} m_{A_i}^2 \leq C \sqrt{\mu} \cdot (\mu^{1/4})^2 = C \mu,$$
yielding \eqref{eqN2}.

The same reasoning can be applied to obtain \eqref{eqN1}, namely
$$  \sum_{i \in I_1} m_{A_i} + \sum_{i \in I_2} m_{A_i} \leq C \sqrt{\mu} \cdot \mu^{1/4} + C \mu \leq C' \mu,$$
yielding \eqref{eqN1}.

\subsection{Proof of Theorem \ref{Lindq}}

The proof is based on the following result :
\begin{lemma}[Lemma 4.1 in \cite{Lind}]\label{LL41}
Let $u_{p,1}$ denote the first eigenfunction of the Dirichlet $p$-Laplacian eigenvalue problem on a bounded Euclidean domain $\Omega \subset \mathbb R^n$, then
$$||u_{p,1}||_{L^\infty(\Omega)} \leq 4^n \lambda^{\frac{n}{p}} ||u_{p,1}||_{L^1(\Omega)}. $$
Note that the constant term $4^n$ is not sharp.
\end{lemma}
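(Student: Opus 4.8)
The plan is to prove the lemma as a local-boundedness (De Giorgi--Moser) estimate for the first eigenfunction, followed by a short interpolation step that replaces the $L^p$ norm naturally produced by the iteration with the $L^1$ norm appearing in the statement. As a preliminary sanity check, note that the inequality is invariant under the rescaling $u(x)\mapsto u(\lambda^{1/p}x)$, which sends an eigenfunction with eigenvalue $\lambda$ on $\Omega$ to one with eigenvalue $1$ on $\lambda^{1/p}\Omega$; this pins down the exponent $n/p$, although I will carry $\lambda$ through explicitly rather than normalize. Since $\lambda_{1,p}$ is simple with a sign-definite eigenfunction, I may assume $u=u_{p,1}>0$, and I will work from the weak formulation $\int_\Omega|\nabla u|^{p-2}\nabla u\cdot\nabla\varphi=\lambda\int_\Omega u^{p-1}\varphi$ valid for all $\varphi\in W_0^{1,p}(\Omega)$.

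The heart of the argument is the iteration inequality. Testing with $\varphi=u^{1+\gamma}$ for $\gamma\ge 0$ (legitimate after a routine truncation $u\wedge M$, $M\to\infty$, or by invoking the standard a priori $L^\infty$ bound for first $p$-eigenfunctions), and writing $q=1+\gamma/p\ge 1$, $w=u^q$, one gets
\[
\int_\Omega|\nabla w|^p=\frac{q^p}{1+\gamma}\int_\Omega u^{\gamma}|\nabla u|^p=\frac{\lambda q^p}{1+\gamma}\int_\Omega w^p\le\lambda\,q^{p-1}\int_\Omega w^p,
\]
using $q^p/(1+\gamma)\le q^{p-1}$ for $q\ge 1$. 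Assuming first $1<p<n$ and setting $\chi=p^*/p=n/(n-p)>1$, the Euclidean Sobolev inequality $\|w\|_{p^*}^p\le S_{n,p}\|\nabla w\|_p^p$ on $W_0^{1,p}(\Omega)\subset W^{1,p}(\mathbb R^n)$ --- whose constant $S_{n,p}$ is independent of $\Omega$ --- gives, after rewriting everything in terms of $u$,
\[
\|u\|_{\chi p q}\le\bigl(S_{n,p}\,\lambda\,q^{p-1}\bigr)^{1/(pq)}\|u\|_{pq},\qquad q\ge 1.
\]
Applying this with $q=\chi^k$ for $k=0,1,2,\dots$ (the case $k=0$ being the eigenvalue identity itself) and telescoping, one uses $\sum_{k\ge 0}\chi^{-k}=\chi/(\chi-1)=n/p$ and the convergence of $\sum_{k\ge 0}k\,\chi^{-k}$ to conclude
\[
\|u\|_\infty=\lim_{k\to\infty}\|u\|_{p\chi^k}\le C(n,p)\,\lambda^{n/p^2}\,\|u\|_p
\]
with $C(n,p)=S_{n,p}^{\,n/p^2}\,\chi^{(p-1)n(n-p)/p^3}<\infty$.

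To reach the $L^1$ norm I would then interpolate: $\int_\Omega u^p\le\|u\|_\infty^{p-1}\|u\|_1$ gives $\|u\|_p\le\|u\|_\infty^{(p-1)/p}\|u\|_1^{1/p}$, so the previous bound becomes $\|u\|_\infty\le C(n,p)\lambda^{n/p^2}\|u\|_\infty^{(p-1)/p}\|u\|_1^{1/p}$; since $0<\|u\|_\infty<\infty$, dividing by $\|u\|_\infty^{(p-1)/p}$ and raising to the power $p$ yields $\|u\|_\infty\le C(n,p)^p\,\lambda^{n/p}\,\|u\|_1$, which is exactly the claimed inequality with the (non-optimal) constant $C(n,p)^p$. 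For $p\ge n$ the Euclidean Sobolev inequality has the wrong scaling, so there I would first use the $p$-Laplacian Faber--Krahn inequality $\lambda=\lambda_{1,p}(\Omega)\ge c_{n,p}|\Omega|^{-p/n}$ to bound $|\Omega|\le C_{n,p}\lambda^{-n/p}$, and then run the same iteration with the embedding $W_0^{1,p}(\Omega)\hookrightarrow L^q(\Omega)$ for a fixed $q>p$ (Morrey if $p>n$, $W_0^{1,n}\hookrightarrow L^q$ if $p=n$), whose $|\Omega|$-dependent constant supplies precisely the power of $\lambda$ that was missing.

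I expect the only genuine friction to be bookkeeping rather than ideas: checking that a crude estimate of $S_{n,p}$ and of the two geometric sums brings $C(n,p)^p$ below $4^n$ --- and since the lemma already asserts this constant is not sharp, one may instead simply leave it as ``some explicit dimensional constant.'' Minor points requiring a little care are the legitimacy of the test functions $u^{1+\gamma}$ before the $L^\infty$ bound is available (handled by truncation), and the clean separation between the $p<n$ case (where the Sobolev constant is scale-invariant, so nothing about $\Omega$ enters) and the $p\ge n$ case (where Faber--Krahn must intervene to restore the correct homogeneity in $\lambda$).
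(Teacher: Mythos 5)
The paper never proves this lemma: it is imported verbatim as Lemma~4.1 of \cite{Lind}, explicit constant $4^n$ included, and is then used as a black box in the proof of Theorem~\ref{Lindq}. Your Moser-type iteration is therefore a genuinely independent, self-contained route, and as a derivation of an estimate of the form $\|u_{p,1}\|_{L^\infty(\Omega)}\le C(n,p)\,\lambda^{n/p}\|u_{p,1}\|_{L^1(\Omega)}$ it is sound: the identity from the test functions $u^{1+\gamma}$ (legitimate after the truncation you indicate), the bound $q^p/(1+\gamma)\le q^{p-1}$, the Sobolev step with the domain-independent constant for $1<p<n$, the telescoping using $\sum_k\chi^{-k}=n/p$ and $\sum_k k\chi^{-k}<\infty$, and the final interpolation $\|u\|_p\le\|u\|_\infty^{(p-1)/p}\|u\|_1^{1/p}$ followed by absorption of $\|u\|_\infty^{(p-1)/p}$ are all correct; your treatment of $p\ge n$ via the $p$-Laplacian Faber--Krahn inequality to restore the missing homogeneity is also the right fix (for $p>n$ no iteration is even needed, since Morrey plus the vanishing of $u$ outside $\Omega$ gives $\|u\|_\infty\le C|\Omega|^{1/n-1/p}\|\nabla u\|_p$ directly). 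One small slip in the heuristic: the rescaling that normalizes the eigenvalue is $u(\lambda^{-1/p}x)$ on $\lambda^{1/p}\Omega$, not $u(\lambda^{1/p}x)$; this does not affect anything else.

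The genuine shortfall is the constant. The statement asserts the explicit, $p$-independent constant $4^n$, which is also quoted verbatim in \eqref{LindqEQ}; your argument yields $C(n,p)^p$ with $C(n,p)=S_{n,p}^{n/p^2}\chi^{(p-1)n(n-p)/p^3}$ for $p<n$, and a constant involving the Faber--Krahn constant $c_{n,p}$ for $p\ge n$, and nothing in the proposal shows these lie below $4^n$, uniformly in $p$ or otherwise — you acknowledge this and propose to settle for ``some explicit dimensional constant.'' That is enough for all the qualitative uses in this paper (and for the corollary counting nodal domains with $m_A\ge a$), but it proves a weaker statement than the one quoted, and the constant in Theorem~\ref{Lindq} would have to be adjusted accordingly; recovering the clean $4^n$ requires either careful tracking of sharp Sobolev/Faber--Krahn constants through your iteration (not obviously successful) or Lindqvist's own argument, which is set up precisely to keep the constant explicit.
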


\begin{remark}
One difference between Chiti-type inequalities and the preceding lemma is  that Chiti-type inequalities apply to any eigenfunction of the Dirichlet eigenvalue problem rather than only to the first one. However, the generalization of Chiti's results to the $p$-Laplace operator (see \cite{AFT}) is of the form
$$ ||u||_r \leq K(r,q,p,n, \lambda) ||u||_q,$$
where $u$ is any eigenfunction associated to eigenvalue $\lambda$, $0 < q < r \leq +\infty$. It is important to notice that the constant $K(r,q,p,n, \lambda)$ is not explicit (since we can not compute the eigenfunctions of the ball explicitly). Thus, we cannot use it as it was done for the Laplace operator.
\end{remark}

We are ready to prove Theorem \ref{Lindq}.
\begin{proof}
Let $||u_{p,\lambda}||_p = 1$. Consider $A_i \subset \Omega$ a nodal domain of $u_{p,\lambda}$. Let us decompose $u_{p, \lambda}$ the following way,
\begin{equation}
u_{p,\lambda}= \sum^{|\mathcal{A}(u_\lambda)|}_{i=1} u_i  \quad \text{where $u_i = \left\{
  \begin{array}{l l}
     u_{p,\lambda} & \quad \text{if $x\in\mathcal{A}_i$}\\
     0 & \quad \text{elsewhere.}\\
   \end{array} \right.$}
\end{equation}
Since $u_{i}$ corresponds to the first eigenfunction of the Dirichlet $p$-Laplacian eigenvalue problem on $A_i$,   Lemma \ref{LL41} yields that
$$ ||u_i||_{\infty, A_i} \leq 4^n \lambda^{\frac{n}{p}} ||u_i||_{1, A_i}, \qquad \forall \quad 1\leq i \leq |\mathcal{A}(u_\lambda)|.$$
Therefore, after summing over all nodal domains, we get that
\begin{eqnarray*}
\sum_{i=1}^{|\mathcal{A}(u_\lambda)|} ||u_i||_{L^\infty(A_i)} = \sum_{i=1}^{|\mathcal{A}(u_\lambda)|} m_{A_i} &\leq& 4^n \lambda^{\frac{n}{p}} \sum_{i=1}^{|\mathcal{A}(u_\lambda)|} ||u_i||_{L^1(A_i)} \\
&\leq& 4^n  \lambda^{\frac{n}{p}} ||u_{p, \lambda}||_{L^1(\Omega)} \\
&\leq& 4^n \operatorname{Vol}(\Omega)^{1-\frac{1}{p}} \lambda^{\frac{n}{p}} ||u_{p, \lambda}||_{L^p(\Omega)}\\
& =  & 4^n \operatorname{Vol}(\Omega)^{1-\frac{1}{p}} \lambda^{\frac{n}{p}}.
\end{eqnarray*}
\end{proof}

\subsection*{Acknowledgments}

 This research is part of my Ph.D. thesis at the Universit\'e de Montreal under the supervision of Iosif Polterovich. The author is grateful to Fedor Nazarov and Mikhail Sodin for communicating the proof of Lemma \ref{lemme2}. The author is indebted to Almut Burchard, for pointing out that Lemma \ref{rearrangement} is in fact an application of the Bathtub principle. The author would also like to thank Laurence Boulanger, Yaiza Canzani, Marlène Frigon, Benoit Kloeckner, Dan Mangoubi, Frédéric Robert, Leonid Polterovich, and Mikhail Sodin for useful discussions.

\vspace{3 mm}

{\scshape Département de mathématiques et de statistique,
Université de Montréal, CP 6128 succ. Centre-Ville, Montréal,
H3C 3J7, Canada.}

\emph{E-mail address:}   \verb"gpoliquin@dms.umontreal.ca"

\end{document}